\date{}
\title{
	Worst-case error for unshifted lattice rules without randomisation
}
\numberwithin{equation}{section}
\renewcommand*{\backref}[1]{}
\renewcommand*{\backrefalt}[4]{[{\normalsize%
		\ifcase #1 Not cited.%
		\or Cited on page~#2.%
		\else Cited on pages #2.%
		\fi%
	}]}
\theoremstyle{theorem}
\newtheorem{theorem}{Theorem}[section]
\newtheorem{proposition}[theorem]{Proposition}
\newtheorem{lemma}[theorem]{Lemma}
\newtheorem{corollary}[theorem]{Corollary}
\newtheorem{conjecture}[theorem]{Conjecture} 
\theoremstyle{definition}
\definecolor{refkey}{rgb}{0.9451,0.2706,0.4941}\definecolor{labelkey}{rgb}{0.9451,0.2706,0.4941}
\definecolor{darkred}{RGB}{139,0,0}
\definecolor{darkgreen}{RGB}{0,100,0}
\definecolor{darkmagenta}{RGB}{139,0,139}
\newcommand{\setu}{{\mathrm{\mathfrak{u}}}}
\newcommand{\setv}{{\mathrm{\mathfrak{v}}}}
\newcommand{\bsx}{{\boldsymbol{x}}}
\newcommand{\bsy}{{\boldsymbol{y}}}
\newcommand{\bsz}{{\boldsymbol{z}}}
\newcommand{\bst}{{\boldsymbol{t}}}
\newcommand{\bsDelta}{\boldsymbol{\Delta}}
\newcommand{\bsgamma}{{\boldsymbol{\gamma}}}
\newcommand{\rd}{{\mathrm{d}}}
\newcommand{\bbZ}{{\mathbb{Z}}}
\newcommand{\bbN}{{\mathbb{N}}}
\newcommand{\bbR}{{\mathbb{R}}}
\newcommand{\satop}[2]{\stackrel{\scriptstyle{#1}}{\scriptstyle{#2}}}
\author{Yoshihito Kazashi, Frances Y.\ Kuo, and Ian H.\ Sloan}
\date{13 November 2018}
\begin{document}

\maketitle
\begin{abstract}
An existence result is presented for the worst-case error of lattice rules
for high dimensional integration over the unit cube, in an unanchored
weighted space of functions with square-integrable mixed first
derivatives. Existing studies rely on random shifting of the lattice to
simplify the analysis, whereas in this paper neither shifting nor any
other form of randomisation is considered. Given that a certain
number-theoretic conjecture holds, it is shown that there exists an
$N$-point rank-one lattice rule which gives a worst-case error of order
$1/\sqrt{N}$ up to a (dimension-independent) logarithmic factor. Numerical
results suggest that the conjecture is plausible.
\end{abstract}

\section{Introduction}
This paper is concerned with an error estimate for a numerical integration
rule for functions defined on high-dimensional hypercube $[0,1)^{s}$,
$s\in\mathbb{N}$,
\begin{equation} \label{eq:int}
	\int_{[0,1)^{s}}f(\bsx)\,\rd\bsx.
\end{equation}
More specifically, we consider the worst-case error for rank-one lattice
rules. The main contribution of this paper is the analysis of unshifted
lattice rules without randomisation; we allow neither shifting nor any
other form of randomisation. Given the truth of a certain conjecture with
a number-theoretic flavour {(Conjecture~\ref{conj:alpha})}, our results
show the existence of a deterministic cubature point set that attains the
worst-case error of the order $1/\sqrt{N}$,  up to a logarithmic factor,
where $N$ is the number of cubature points, with a dimension-independent
constant (Corollary~\ref{cor:final-cor}). 	

An $N$-point rank-one lattice rule in $s$-dimension is an equal-weight
cubature rule for approximating the integral \eqref{eq:int} --- a
quasi-Monte Carlo rule --- of the form
\begin{equation} \label{eq:qmc}
	\frac1{N}\sum_{k=0}^{N-1}f(\bst_k),
\end{equation}
with cubature points
\begin{equation} \label{eq:lat}
  \bst_{k} =\bigg\{\frac{k\bsz}{N}\bigg\} ,\quad k=0,\dotsc, N-1,
\end{equation}
for some $\bsz \in \{1,\dotsc ,N-1\}^{s}$, where $\{\bsx\}\in [0,1)^s$ for
$\bsx=(x_1,\dots, x_s)\in [0,\infty)^{s}$ denotes the vector consisting of
the fractional part of each component of $\bsx$. The choice of $\bsz$,
known as the \textit{generating vector}, completely determines the
cubature points, and thus the quality of the cubature rule. Our interest
in this paper lies in proving the existence of a good generating vector
$\bsz\in \{1,\dotsc ,N-1\}^{s}$. The figure of merit we consider is the
so-called worst-case error, defined by
\[
	e(N,\bsz):=
	e(N,(\{k\bsz/N\})_k):=
	\sup\limits_{f\in H_{s,\bsgamma},\,\|f\|_{H_{s,\bsgamma}}\le 1}
	\bigg|
	\int_{[0,1)^{s}} f(\bsx)\,\rd\bsx
	-
	\frac1{N}\sum_{k=0}^{N-1}f(\{{k\bsz}/{N}\}) \bigg|,
\]
where $H_{s,\bsgamma}$ is a suitable normed space consisting of
non-periodic functions over $[0,1)^s$, specified below. As is standard
nowadays, we will assume that the norm incorporates certain
parameters~$\gamma_\setu$, one for each subset $\setu\subseteq
\{1,2,\ldots,s\}$, since without weights integration problems are often
intractable{, see {\cite{Dick.J_etal_2013_Acta,SW98}} for more
details}.

It is natural to seek a generating vector $\bsz$ that makes the worst-case
error small. If $H_{s,\bsgamma}$ is a reproducing kernel Hilbert space
then the worst-case error $e(N,\bsz)$ can be computed for any value of
$\bsz$ (see below), but there is no known formula that gives a good value
of $\bsz$ for general $s$. The strategy we take in this paper is to prove
an existence result, by considering the average of $e^2(N,\bsz)$ over all
possible generating vectors $\bsz\in Z_{N}^s$, with $Z_N:=\{1,2,
\ldots,N-1 \}$, i.e. we compute
\begin{equation} \label{eq:avg}
  \overline{e}^{2}(N ):=\frac1{(N-1)^s}\sum_{\bsz\in Z_{N}^s} e^2(N,\bsz);
\end{equation}
and then use the well known principle that there must exist one choice of
$\bsz$ that is as good as average.
With the support of a certain number-theoretic conjecture
({Conjecture~\ref{conj:alpha}}), which does not depend on the choice of
$\bsz$), we will show that {
\[
  \overline{e}^{2}(N ) \le \frac{C\,(\ln N)^{\alpha}}{N},
\]}%
with $C$ independent of $N$, where $\alpha>0$ is an exponent appearing in
the conjecture that depends on neither $s$ nor $N$. Moreover, $C$ is
independent of $s$ for suitable weights $\gamma_\setu$. It follows that
there exists a generating vector $\bsz^*$ for which the worst-case error
$e(N,\bsz^*)$ is bounded by $\sqrt{C}(\ln N)^{\alpha/2}/\sqrt{N}$
(Corollary~\ref{cor:final-cor}).

For periodic function spaces, error estimates for rank-one lattice rules
are well known; see
{\cite{Dick.J_etal_2013_Acta,Hic98b,Nie92,Sloan.I.H_Joe_1994_book}} and
references therein.
For non-periodic functions, with the aid of \textit{shifting}---changing
the cubature points from $\{k\bsz/N\}$ to $\{k\bsz/N + \bsDelta\}$ with
elements $\bsDelta\in [0,1)^s$---good results have been obtained for
shift-averaged worst-case errors; see \cite{Dick.J_etal_2013_Acta} and
references therein for more details.
In the present paper, however, the function space is not periodic, and the
worst-case error we consider is not shift-averaged.	Approaches to
estimating the error for lattice rules for non-periodic functions without
randomisation include \cite{Dick.J_etal_2014_tent,Goda.T_etal_2017_tent}
where a change of variable called the tent transform was applied to the
integrand. In this paper, however, we do not transform the integrand.

The shift-averaged worst-case error mentioned above is the expected
worst-case error for \textit{randomly shifted} lattice rules, see
\cite{Dick.J_etal_2013_Acta}. The present paper is a first step in our
project to ``\textit{derandomise}'' randomly shifted lattice rules---that
is, to produce explicit shifts  (for an untransformed rule) that gives
worst-case errors that lose no accuracy compared to the shift-averaged
worst-case errors. While randomly shifted lattice rules have the
advantages of {providing us with an online error estimator and are} simple to  analyse and construct, they 
are less efficient than a good deterministic rule, because of the need in
practice to repeat the calculations of integrals with fixed $\bsz$ for
some number (say $30$) of random shifts.
In this first step in this programme, we study the case of zero shifts.
(Experience suggests that this is a poor choice---perhaps the worst!)

{There are related works in \cite{Joe04,Joe06} where a quantity called
`$R$', which is connected to the so-called (weighted) star discrepancy,
was considered as the error criterion. In the weighted setting in
\cite{Joe06}, lattice rules can be constructed to achieve
$O(n^{-1+\delta})$ convergence rate for any $\delta
>0$, with the implied constant independent of $s$ and $N$ for suitable weights.}

After establishing the setting in Section~\ref{sec:preliminaries}, the
conjecture and the main results are stated in Section~\ref{sec:estimate}.
Section~\ref{sec:num} provides numerical evidence relating to the
conjecture. Section~\ref{sec:conclusion} concludes the paper. 	

\section{Preliminaries}\label{sec:preliminaries}

In this section, we introduce the setting and recall some facts on lattice
rules that will be needed later. Throughout this paper, we assume that
$N$, the number of cubature points, is a prime number. Let us start with a
general reproducing kernel Hilbert space (RKHS) $H_{s}$ with a reproducing
kernel $K\colon[0,1]^s\times [0,1]^s\to \mathbb{R}$ that satisfies
\[
  \int_{[0,1]^s}\int_{[0,1]^s} K(\bsx,\bsy) \,\rd\bsx\,\rd\bsy < \infty.
\]
It is well known that for a general quasi-Monte Carlo (QMC) rule
\eqref{eq:qmc}, the square of the worst-case error in $H_s$,
\[
  e(N,(\bst_k)_k):=\sup\limits_{f\in H_{s},\,
  \|f\|_{H_{s}}\le 1}
  \bigg|
  \int_{[0,1)^{s}} f(\bsx)\,\rd\bsx
  - \frac1{N}\sum_{k=0}^{N-1}f(\bst_k) \bigg|,
\]
is given by
\begin{align*}
  &e^{2}(N,(\bst_k)_k) \\
  &=
	\int_{[0,1]^{s}}\int_{[0,1]^{s}}
	K(\bsx,\bsy)\,\rd\bsx\,\rd\bsy
	-\frac{2}{N}\sum_{k=0}^{N-1} \int_{[0,1]^{s}} K(\bst_{k} ,\bsx)\,\rd\bsx
	+\frac{1}{N^{2}}\sum_{k=0}^{N-1}\sum_{k'=0}^{N-1} K(\bst_{k} ,\bst_{k'}),
\end{align*}
see for example \cite[Theorem 3.5]{Dick.J_etal_2013_Acta}.
We specialise to the case
\begin{equation*}
  \int_{[0,1]^{s}} K(\bsx,\bsy)\,\rd\bsx =1 \qquad\text{for any } \bsy\in [0,1]^{s} ,
\end{equation*}
to obtain
\begin{align} \label{eq:err1}
  e^{2}(N,(\bst_k)_k)
  &= \frac{1}{N^{2}}\sum_{k=0}^{N-1}\sum_{k'=0}^{N-1} K(\bst_{k},\bst_{k'}) -1.
\end{align}
In particular, for the QMC rule we here take an unshifted lattice rule
with cubature points given by \eqref{eq:lat} for some $\bsz \in Z_N^s$.
Then, we have
\begin{equation}
	e^{2}(N,\bsz) =
	\frac{1}{N^{2}}
	\sum_{k=0}^{N-1}\sum_{k'=0}^{N-1}
	K\left(\bigg\{\frac{k\bsz}{N}\bigg\} ,\bigg\{\frac{k'\bsz}{N}\bigg\}\right) -1.
	\label{eq:decomp-err-in-u}
\end{equation} 	
Now we further specialise the RKHS to $H_{s,\bsgamma}$ with kernel
\begin{equation}
	K_{s,\bsgamma }(\bsx,\bsy) =\sum_{\setu \subseteq \{1:s\}} \gamma_{\setu}\prod_{j\in \setu } \eta ( x_{j} ,y_{j}) ,
	\label{eq:def-kernel}
\end{equation}
where
\begin{equation*}
  \eta(x,y) :=\frac{1}{2} B_{2}( |x-y|) +\Big( x-\frac{1}{2}\Big)\Big( y-\frac{1}{2}\Big) ,\qquad x,y\in [0,1] .
\end{equation*}
Here $B_2(t)=t^2-t+1/6$, $t\in\mathbb{R}$ is the Bernoulli polynomial of
degree $2$, $\{1:s\}$ is a shorthand notation for $\{1,2,...,s\}$, and the
sum in \eqref{eq:def-kernel} is over all subsets $\setu\subseteq\{1:s\}$,
including the empty set; and $\bsgamma = \{\gamma_{\setu}\}_{\setu\subset
\mathbb{N}}$ is an arbitrary collection of positive numbers called
\textit{weights} with $\gamma_{\emptyset}=1$. The choice of weights plays
an important role in deriving a dimension-independent error estimate, see
Corollary~\ref{cor:final-cor}. This space, discussed fully in
\cite{Dick.J_etal_2013_Acta}, is an ``unanchored'' space of functions on
the unit cube with square integrable mixed first derivatives. We again
refer the readers to \cite{Dick.J_etal_2013_Acta} for more details. For
this space it follows from \eqref{eq:err1} that 		
\begin{equation}
	e^{2}(N,\bsz)
	=\sum_{{\emptyset \neq } \setu \subseteq \{1:s\}} \gamma_{\setu} \, e^{2}_{\setu}(N,\bsz_\setu) ,
	\label{eq:def-e2}
\end{equation}
where for $\setu \subseteq \{1:s\}$ and $\bsz_\setu = (z_j)_{j\in\setu}$,
from \eqref{eq:err1} and \eqref{eq:def-kernel}
\begin{align}
	& e^{2}_{\setu}(N,\bsz_\setu)\notag\\
	&
	:=
	\frac{1}{N^{2}}
	\sum_{k=0}^{N-1}
	\sum_{k'=0}^{N-1}\prod_{j\in \setu}
	\bigg[
	\frac{1}{2}
	B_{2}\bigg(
	\bigg|\bigg\{\frac{kz_{j}}{N}\bigg\} -\bigg\{\frac{k'z_{j}}{N}\bigg\} \bigg|
	\bigg)
	+\left(\bigg\{\frac{kz_{j}}{N} \bigg\} -\frac{1}{2}\right)
	\left(\bigg\{\frac{k'z_{j}}{N} \bigg\} -\frac{1}{2}\right)
	\bigg] .
	\label{eq:def-eu2}
\end{align}
Thus the quantity $e^{2}_{\setu}(N,\bsz_\setu)$ is a key to deriving an
estimate for $e^{2}(N,\bsz)$.

\section{Existence result for worst-case error}\label{sec:estimate}

In this section, we derive an existence result for the worst-case error.
We first note the following property.
\begin{proposition}\label{prop:property-B2}
Let $g$ be a function that satisfies $g(t)=g(1-t)$ for $t\in [0,1]$. Then
for $a,b\ge0$ we have
\[
 g(|\{a\} -\{b\}|) = g(\{a-b\}),
\]
where, as before, the braces indicate that we take the fractional part of
the real number.
\end{proposition}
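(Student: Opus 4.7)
The plan is to reduce the identity to a single one-dimensional computation about fractional parts and then invoke the symmetry hypothesis on $g$. Writing $x := \{a\}$ and $y := \{b\}$ with $x,y \in [0,1)$, the right-hand side is $g(|x-y|)$, so the task becomes showing that $\{a-b\}$ equals either $|x-y|$ or $1-|x-y|$; the symmetry $g(t)=g(1-t)$ then immediately identifies $g(\{a-b\})$ with $g(|x-y|)$.

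To see the fractional-part identity, I would split into cases. First, write $a = \lfloor a\rfloor + x$ and $b = \lfloor b\rfloor + y$, so that $a - b = (\lfloor a\rfloor - \lfloor b\rfloor) + (x-y)$, with $x - y \in (-1,1)$. If $x \ge y$, then $x - y \in [0,1)$ is already the fractional part, so $\{a-b\} = x - y = |x-y|$. If $x < y$, then $x - y \in (-1,0)$, so I rewrite $a-b = (\lfloor a\rfloor - \lfloor b\rfloor - 1) + (1 + x - y)$ with $1 + x - y \in (0,1)$, giving $\{a-b\} = 1 - (y-x) = 1 - |x-y|$.

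In either case, applying the hypothesis $g(t) = g(1-t)$ (which is exactly needed in the second case) yields $g(\{a-b\}) = g(|x-y|) = g(|\{a\}-\{b\}|)$, completing the proof.

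There is really no obstacle to this argument; it is a short case distinction. The only subtlety worth flagging is that one must keep track of whether the fractional part of the difference equals $|x-y|$ or its reflection $1-|x-y|$ about $1/2$, which is precisely the symmetry that $B_2(|\cdot|)$ (the intended application) enjoys since $B_2(t) = B_2(1-t)$.
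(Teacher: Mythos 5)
Your argument is correct and follows essentially the same route as the paper's proof: split into cases according to whether $\{a\}\ge\{b\}$ or $\{a\}<\{b\}$, compute $\{a-b\}$ in each case (you do this via an explicit integer/fractional-part decomposition; the paper instead invokes the identity $\{t\}+\{-t\}=1$ for $t\notin\bbZ$), and then apply $g(t)=g(1-t)$ in the second case. One small slip to fix: where you write ``the right-hand side is $g(|x-y|)$'' you mean the \emph{left}-hand side, since $|\{a\}-\{b\}| = |x-y|$ appears on the left of the claimed identity; this is cosmetic and does not affect the argument.
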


\begin{proof}
Note first that $\{a\},\{b\}\in[0,1)$ and therefore $\{a\} -\{b\}\in
(-1,1)$.  It is clear that  $\{a\} -\{b\}$ differs from $\{a-b\}$ by $1$
or $0$. If $\{a\}=\{b\}$, then $\{a-b\}=0$ and the result is trivial. If
$\{a\}>\{b\}$, then $\{a\}-\{b\}\in (0,1)$, and so $\{a\}-\{b\}=\{a-b\}$.
Thus, again the result is trivial. If $\{a\}<\{b\}$, then
$|\{a\}-\{b\}|=\{b\}-\{a\}\in (0,1)$ and so $|\{a\}-\{b\}|=\{b-a\}$. Thus,
using $g(t)=g(1-t)$, $t\in [0,1]$ we have
\[
  g(|\{a\}-\{b\}|)
  = g(\{b\}-\{a\}) = g(\{b-a\}) = g(1-\{b-a\}) = g(\{a-b\}),
\]
where in the last step we used the identity $\{t\}+\{-t\}=1$ for $t\not\in
\mathbb{Z}$.
\end{proof}
In particular, Proposition~\ref{prop:property-B2} applies to the function
$B_2(\cdot)$ so we can rewrite \eqref{eq:def-eu2} as
\begin{align}
	&e^{2}_{\setu}(N,\bsz_\setu) \notag\\
	&=\frac{1}{N^{2}}\sum_{k=0}^{N-1}\sum_{k'=0}^{N-1}\prod_{j\in \setu }
	\bigg[
	\frac{1}{2}
	B_{2}\left( \bigg\{\frac{( k-k') z_{j}}{N}\bigg\} \right)
	+\left(\bigg\{\frac{kz_{j}}{N} \bigg\} -\frac{1}{2}\right)\left(\bigg\{\frac{k'z_{j}}{N} \bigg\} -\frac{1}{2}\right)
	\bigg].\label{eq:(6)}
\end{align}
Now we obtain the average over $\bsz \in Z_{N}^s$. From \eqref{eq:avg} and
\eqref{eq:def-e2} we have
\begin{equation}
	\overline{e}^{2}(N)
	=\sum_{\emptyset \neq \setu \subseteq \{1:s\}} \gamma_{\setu}\, \overline{e}^{2}_{\setu}(N),
	\label{eq:def-bar-e2N}
\end{equation}
where
\begin{align}
	\overline{e}^{2}_{\setu}(N)
	:=
	\frac{1}{(N-1)^{s}}\sum_{\bsz \in Z_{N}^s}
	e^{2}_{\setu}(N,\bsz_\setu)
	&=
	\frac{1}{(N-1)^{|\setu |}}\sum_{\bsz_\setu \in Z_N^{|\setu|}} e^{2}_{\setu}(N,\bsz_\setu) \nonumber\\
	&
	= \frac{1}{N^{2}}
	\sum_{k=0}^{N-1}
	\sum_{k'=0}^{N-1}
	(
	X_{N;k,k'} + J_{N;k,k'}
	)^{|\setu|} ,\label{eq:def-bar-e-u}
\end{align} 	
{with}
\begin{equation}
	X_{N;k,k'}:=
	\frac{1}{2(N-1)}\sum ^{N-1}_{z =1}
	B_{2}\left( \bigg\{\frac{( k-k') z}{N}\bigg\} \right) ,    \label{eq:def-XNkk'}
\end{equation}
and
\begin{equation}
	J_{N;k,k'} :=
	\frac{1}{N-1}\sum_{z=1}^{N-1}
	\bigg(\bigg\{\frac{kz}{N} \bigg\}-\frac{1}{2}\bigg)
	\bigg(\bigg\{\frac{k'z}{N} \bigg\}-\frac{1}{2}\bigg). \label{eq:def-JNkk'}
\end{equation}
Further, the binomial theorem gives
\begin{equation}
	\overline{e}^{2}_{\setu}(N)
	=
	\frac{1}{N^{2}}
	\sum_{k=0}^{N-1}
	\sum_{k'=0}^{N-1}
	\sum_{ \setv\subseteq \setu }
	(X_{N;k,k'})^{|\setu\setminus \setv|}
	(J_{N;k,k'})^{|\setv|}
	.\label{eq:e2binom}
\end{equation}
In seeking an error estimate for the generating-vector-averaged worst-case
error $\overline{e}^{2}(N)$, we take the point of view that estimates of
order $1/N$ or higher are relatively harmless, so we are concentrating on
isolating terms that are more slowly converging.	

In the following two subsections, we derive estimates for $X_{N;k,k'}$ and
$J_{N;k,k'}$. It turns out that, roughly speaking, the terms
$(X_{N;k,k'})^{|\setu\setminus \setv|}$ yield the order $1/N$. The terms
$(J_{N;k,k'})^{|\setv|}$ seem to converge more slowly, and require more
detailed analysis. 	

\subsection{Estimates for $X_{N;k,k'}$}\label{estimaesfor}

We have the following expression for $X_{N;k,k'}$.
\begin{lemma}\label{lem:estim-XNkk'}
For $N$ prime and $k,k'\in \{0,1,\dots,N-1\}$, the quantity $X_{N;k,k'}$
defined in \eqref{eq:def-XNkk'} satisfies {
\begin{equation}
 X_{N;k,k'}=
 \begin{cases}
 \displaystyle\frac1{12}   & \mbox{if } k=k', \\[5pt]
 -\displaystyle\frac1{12N} & \mbox{if } k\not=k'.
  \end{cases}
\end{equation}}%
\end{lemma}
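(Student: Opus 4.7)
The plan is to reduce the sum defining $X_{N;k,k'}$ to a standard sum of values of the Bernoulli polynomial $B_2$ at rationals $j/N$, and then evaluate that sum using a well-known identity. Write $B_2(t) = t^2 - t + 1/6$. Two cases naturally arise depending on whether $k = k'$ or not.

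First, if $k = k'$, then $(k-k')z/N = 0$ for every $z$, so $B_2(\{(k-k')z/N\}) = B_2(0) = 1/6$. The sum in \eqref{eq:def-XNkk'} therefore has $N-1$ equal summands, and dividing by $2(N-1)$ gives $X_{N;k,k'} = 1/12$ immediately.

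Next, if $k \neq k'$, set $m := k - k'$. Because $k,k' \in \{0,\dots,N-1\}$ with $k \neq k'$, we have $m \not\equiv 0 \pmod N$, and since $N$ is prime, $m$ is invertible modulo $N$. Hence the map $z \mapsto mz \bmod N$ is a bijection from $\{1,\dots,N-1\}$ onto itself, so the fractional parts $\{mz/N\}$ for $z = 1, \ldots, N-1$ are a permutation of $1/N, 2/N, \ldots, (N-1)/N$. It follows that
\[
  \sum_{z=1}^{N-1} B_2\!\left(\left\{\frac{(k-k')z}{N}\right\}\right) = \sum_{j=1}^{N-1} B_2(j/N).
\]

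To finish, I would invoke the multiplication formula for Bernoulli polynomials, which at $x=0$ and $n=2$ gives $\sum_{j=0}^{N-1} B_2(j/N) = N^{-1}B_2(0) = 1/(6N)$. Subtracting the $j=0$ contribution $B_2(0) = 1/6$ yields $\sum_{j=1}^{N-1} B_2(j/N) = -(N-1)/(6N)$. Dividing by $2(N-1)$ gives $X_{N;k,k'} = -1/(12N)$, completing the proof. If one prefers to avoid citing the multiplication formula, the same identity can be derived from scratch by expanding $B_2(j/N) = (j/N)^2 - (j/N) + 1/6$ and using $\sum_{j=0}^{N-1} j = N(N-1)/2$ and $\sum_{j=0}^{N-1} j^2 = N(N-1)(2N-1)/6$; this is the only real computation in the proof, and it is routine. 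The essential conceptual step is the bijection argument enabled by primality of $N$.
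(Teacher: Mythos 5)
Your proof is correct, but it takes a genuinely different route from the paper's. The paper expands $B_2$ in its (absolutely convergent) Fourier series $B_2(x)=\frac{1}{2\pi^2}\sum_{h\ne 0}\exp(2\pi\mathrm{i}hx)/h^2$, swaps sums, and evaluates the inner geometric sum $\sum_{z=0}^{N-1}\exp(2\pi\mathrm{i}zh(k-k')/N)$, using primality of $N$ to conclude that $h(k-k')\equiv_N 0 \iff h\equiv_N 0$ and finishing with $\sum_{\ell\ne 0}(\ell N)^{-2}=\pi^2/(3N^2)$. You instead exploit primality at the level of the summation index: the map $z\mapsto (k-k')z\bmod N$ permutes $\{1,\dots,N-1\}$, reducing the sum to $\sum_{j=1}^{N-1}B_2(j/N)$, which you evaluate either by the multiplication formula or by plugging in $B_2(t)=t^2-t+1/6$ and summing powers. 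Both steps are watertight (and I checked the arithmetic: $\sum_{j=1}^{N-1}B_2(j/N)=-(N-1)/(6N)$, giving $X_{N;k,k'}=-1/(12N)$). Your argument is more elementary in that it avoids infinite Fourier series entirely and uses only finite algebra; it is arguably the cleaner proof of this isolated lemma. The paper's Fourier-analytic route, on the other hand, is consistent with the machinery used for $J_{N;k,k'}$ in Lemmas~\ref{lem:identity-J}--\ref{lem:J1}, where only conditionally convergent $B_1$-series appear and the character-orthogonality/geometric-sum technique is genuinely needed, so the authors' choice keeps the section's toolkit uniform.
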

\begin{proof}
For $k=k'$, we have $B_2(0)=\frac16$. For $k\not=k'$, recalling
 the (absolutely convergent) series representation
\[
  B_{2}(x)=\frac{1}{2\pi^{2}}\sum_{h\ne 0}\frac{\exp(2\pi
  \mathrm{i}hx)}{h^{2}}, \qquad x\in [0,1],
\]
we have
\begin{align*}
  X_{N;k,k'}
  &=\frac{1}{4\pi^{2}(N-1)}\sum_{h\ne 0}\frac{1}{h^{2}}
    \sum_{z=1}^{N-1}\exp( 2\pi \mathrm{i}h( k-k') z/N)\\
  &=\frac{1}{4\pi^{2}(N-1)}\sum_{h\ne 0}\frac{1}{h^{2}}\left(\sum ^{N-1}_{z=0}\exp( 2\pi \mathrm{i}zh( k-k') /N) -1\right) ,
\end{align*}%
with
\begin{equation*}
		\sum ^{N-1}_{z=0}\exp( 2\pi \mathrm{i}zh(k-k')/N) =\begin{cases}
		N & \text{if}\ \ h(k-k')\equiv_N 0, \\
		0 & \text{if}\ \ h(k-k')\not\equiv_N 0.
		\end{cases} 
\end{equation*}
Throughout this paper, the notation $a \equiv_N b$ means that $a\equiv b
\pmod N$, and similarly $a \not\equiv_N b$ means that $a\not\equiv b \pmod
N$. Since $N$ is prime and $k\ne k'$, we conclude that all possible values
of $k-k'$, namely, $\pm 1,\pm 2, \ldots$, $\pm (N-1)$, are relatively
prime to $N$, and so $h(k-k')\equiv_N 0 \iff h\equiv_N 0$. Thus
\begin{align*}
		X_{N;k,k'}
		&=
		\frac{1}{4\pi^{2}(N-1)}
		\Bigg(N\sum_{\substack{
				h\neq 0\\
				h\equiv_N 0
		}}\frac{1}{h^{2}} -\sum_{h\ne 0}\frac{1}{h^{2}}\Bigg)\\
		&=\frac{1}{4\pi^{2}(N-1)}\bigg(N\sum_{\ell \not{=} 0}\frac{1}{(\ell N)^{2}} -\frac{\pi^{2}}{3}\bigg)
        =\frac{1}{4\pi^{2}(N-1)}\left(\frac{N}{N^{2}}\frac{\pi^{2}}{3} -\frac{\pi^{2}}{3}\right)
		=-\frac{1}{12N},
\end{align*}%
which completes the proof.
\end{proof}

We deduce the following estimate for $\overline{e}^{2}_{\setu}(N)$.
	
\begin{proposition}\label{prop:bd-1st-term-ofe2}
For $N$ prime, the quantity $\overline{e}^{2}_{\setu}(N)$ defined in
\eqref{eq:e2binom} satisfies
\begin{equation*}
		\overline{e}^{2}_{\setu}(N)
		\le c_{\setu}
		\frac1N
		+
		\frac1{N^2}
		\sum_{k=1}^{N-1}\sum_{k'=1}^{N-1}
		(J_{N;k,k'})^{|\setu|},
  \qquad \mbox{with}\quad
  c_{\setu}:=\frac{2}{3^{|\setu |}} + \frac{1}{4^{|\setu |}}.
\end{equation*}
\end{proposition}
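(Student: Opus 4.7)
The plan is to start from the binomial expansion \eqref{eq:e2binom}, isolate the contribution of $\setv=\setu$ so as to recover the target $J$ double sum on the right-hand side of the claim, and bound the remaining $\setv\subsetneq\setu$ contribution by $c_{\setu}/N$. Concretely, I would write
\[
\overline{e}^{2}_{\setu}(N)
=\frac{1}{N^{2}}\sum_{k,k'=0}^{N-1}J_{N;k,k'}^{|\setu|}
+\frac{1}{N^{2}}\sum_{k,k'=0}^{N-1}\sum_{\setv\subsetneq\setu}X_{N;k,k'}^{|\setu\setminus\setv|}J_{N;k,k'}^{|\setv|},
\]
and treat the two sums separately.

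For the first sum, a direct computation from \eqref{eq:def-JNkk'} gives $J_{N;0,0}=1/4$, while $J_{N;0,k'}=J_{N;k,0}=0$ whenever the other index is nonzero, because by primality of $N$ the residues $\{k'z/N\}$ with $z=1,\dots,N-1$ permute $\{1/N,\dots,(N-1)/N\}$ so $\sum_{z=1}^{N-1}(\{k'z/N\}-1/2)=0$. Since $|\setu|\ge 1$, the first sum therefore equals the desired $J$ double sum plus an overflow $(1/4)^{|\setu|}/N^{2}$. For the second sum I would apply the binomial identity $\sum_{\setv\subsetneq\setu}a^{|\setu\setminus\setv|}b^{|\setv|}=(a+b)^{|\setu|}-b^{|\setu|}$ pointwise in $(k,k')$ and split into cases using Lemma~\ref{lem:estim-XNkk'}. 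The subcase $k=k'=0$ evaluates to $(1/3)^{|\setu|}-(1/4)^{|\setu|}$; the subcase ``exactly one of $k,k'$ is zero'' collapses to $(-1/(12N))^{|\setu|}$ per pair, which is negligible. For $k=k'\neq 0$ the same permutation argument yields the closed form $J_{N;k,k}=(N-2)/(12N)$, hence $X_{N;k,k}+J_{N;k,k}=(N-1)/(6N)\le 1/6$, and the factorisation $a^{n}-b^{n}=(a-b)\sum_{j=0}^{n-1}a^{j}b^{n-1-j}$ with $a-b=1/12$ gives a per-index bound of order $|\setu|(1/6)^{|\setu|-1}/12$.

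The only subcase I expect to be delicate is $k\neq k'$ with both $k,k'\neq 0$, where no closed form for $J_{N;k,k'}$ is available. Here I would use Cauchy--Schwarz together with the diagonal formula just derived,
\[
|J_{N;k,k'}|\le\sqrt{J_{N;k,k}\,J_{N;k',k'}}=(N-2)/(12N)\le 1/12,
\]
and reuse the $a^{n}-b^{n}$ factorisation, now with $|X_{N;k,k'}|=1/(12N)$, to obtain a per-pair bound of order $|\setu|(1/6)^{|\setu|-1}/(12N)$; summed over the $(N-1)(N-2)$ such pairs and divided by $N^{2}$ this contributes the same shape $|\setu|/(2\cdot 6^{|\setu|}N)$ as the diagonal case, so the two nonzero-index subcases together contribute at most $|\setu|/(6^{|\setu|}N)$. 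Collecting everything, the resulting upper bound for the difference is $(1/3)^{|\setu|}/N^{2}+(1/4)^{|\setu|}/N^{2}+|\setu|/(6^{|\setu|}N)$ plus a negligible $O(N^{-|\setu|-1})$ remainder; using $N\ge 2$ to absorb the $1/N^{2}$ pieces into $1/N$ pieces, together with the elementary inequality $|\setu|\le 2^{|\setu|}$ (equivalently $|\setu|/6^{|\setu|}\le 3^{-|\setu|}$), the total is bounded by $(2\cdot 3^{-|\setu|}+4^{-|\setu|})/N=c_{\setu}/N$, exactly as claimed.
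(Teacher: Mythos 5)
Your proposal is correct, but it organises the argument rather differently from the paper and uses several more precise observations than the paper needs. The paper's proof first separates the $(k,k')$ range into diagonal and off-diagonal pairs, so that the diagonal terms can be handled via $(X_{N;k,k}+J_{N;k,k})^{|\setu|}\le(1/3)^{|\setu|}$ with the crude bound $0\le J_{N;k,k}\le 1/4$; it then expands the off-diagonal terms binomially and treats the $\setv\subsetneq\setu$ contribution with the single estimate $|J_{N;k,k'}|\le 1/4$ (never needing the exact values $J_{N;0,k'}=0$ or $J_{N;k,k}=(N-2)/(12N)$ or Cauchy--Schwarz). Finally, it removes the $k=0$ and $k'=0$ rows/columns at a cost of $2/(4^{|\setu|}N)$ and re-inserts the non-negative diagonal $J$-terms for free. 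Your proposal instead splits first on the index set $\setv$, isolating $\setv=\setu$ to recover the target sum directly, and then disposes of the $\setv\subsetneq\setu$ remainder by a case analysis in $(k,k')$; to control those cases you compute $J_{N;0,0}=1/4$, $J_{N;0,k'}=J_{N;k,0}=0$, and $J_{N;k,k}=(N-2)/(12N)$ exactly (all correct, by the permutation argument you give), use Cauchy--Schwarz to get $|J_{N;k,k'}|\le(N-2)/(12N)$ for the off-diagonal nonzero case, and close with the telescoping factorisation $A^n-B^n=(A-B)\sum_j A^jB^{n-1-j}$ plus the elementary inequality $|\setu|\le 2^{|\setu|}$. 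Both routes land on the same constant $c_\setu=2/3^{|\setu|}+1/4^{|\setu|}$. The paper's version is shorter because it deliberately uses only the crude $|J|\le 1/4$ bound everywhere; your version establishes sharper intermediate facts about $J_{N;k,k'}$ that are not used elsewhere in the paper, so they cost extra work without improving the final constant. Nonetheless, your estimates are all valid and the absorption of the $1/N^2$ and negligible pieces into $c_\setu/N$ checks out for $N\ge 2$.
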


\begin{proof}
On separating out the diagonal terms of \eqref{eq:e2binom}, we have
\begin{align} \label{eq:two-terms}
	\overline{e}^{2}_{\setu}(N)		
& =\frac{1}{N^{2}}\sum_{k=0}^{N-1}
			(X_{N;k,k} +J_{N;k,k} )^{|\setu|}
			+\frac{1}{N^{2}}\sum_{k=0}^{N-1}\sum\limits ^{N-1}_{\substack{
				k'=0\\
				k'\neq k}}\sum_{\setv \subseteq \setu}
   (X_{N;k,k'})^{|\setu \setminus \setv|} (J_{N;k,k'})^{|\setv|} .
\end{align}
{}From $X_{N;k,k} =\frac{1}{12}$ and $0\le J_{N;k,k}\leq \frac{1}{N-1}\sum
^{N-1}_{z=1}\frac{1}{4} =\frac{1}{4}$, the first term in
\eqref{eq:two-terms} can be bounded by		
\begin{equation*}
		\frac{1}{N^{2}}\sum_{k=0}^{N-1}
		(X_{N;k,k} +J_{N;k,k} )^{|\setu|}\leq \frac{1}{3^{|\setu |} N} .
\end{equation*}
For the second term in \eqref{eq:two-terms}, noting $ |J_{N;k,k'} |\leq
\frac{1}{4}$, from Lemma~\ref{lem:estim-XNkk'} we have for any $\setv
\subseteq \setu$
\begin{equation*}
	\big| (X_{N;k,k'})^{|\setu \setminus \setv|}(J_{N;k,k'})^{|\setv|} \big|
  \leq \frac{1}{(12N)^{|\setu \setminus \setv |}\, 4^{|\setv |}} ,
\end{equation*}
and thus summing over $\displaystyle \mathfrak{v\subsetneq u}$ and
estimating $N^{-|\setu\setminus\setv|}$ by $N^{-1}$ we obtain
\[
  \displaystyle \sum_{\mathfrak{v\subsetneq u}} \big|
  (X_{N;k,k'})^{|\setu \setminus \setv|}
  (J_{N;k,k'})^{|\setv|} \big|
  \leq \frac{1}{N}\sum_{\mathfrak{v\subsetneq u}}\frac{1}{12^{|\setu \setminus \setv |}\, 4^{|\setv |}}.
\]
Further, from the binomial theorem we have
 $\sum_{\mathfrak{v\subsetneq u}}\frac{1}{12^{|\setu \setminus \setv |} 4^{|\setv |}}=\big(\frac{1}{12}+\frac14\big)^{|\setu|}-\frac1{4^{|\setu|}}
		=
		\frac1{3^{|\setu|}}
		-\frac1{4^{|\setu|}} $.
Using this, together with the case $\displaystyle \setv =\setu$, we obtain
\begin{equation*}
		\overline{e}^{2}_{\setu} (N)\leq
		\left(\frac{2}{3^{|\setu |}} - \frac{1}{4^{|\setu|}}\right)
		\frac{1}{N} +\frac{1}{N^{2}}\sum_{k=0}^{N-1}\sum\limits ^{N-1}_{\substack{
				k'=0\\
				k'\neq k}}
			(J_{N;k,k'})^{|\setu|}.
\end{equation*}%
Using again $|J_{N;k,k'}|\le 1/4$, we can separate out the contributions
for $k=0$ or $k'=0$, to obtain
\[
	\frac{1}{N^{2}}\sum_{k'=1}^{N-1}
	 |J_{N;0,k'} |^{|\setu|}
	\leq \frac{N-1}{4^{|\setu|} N^2} \leq \frac{1}{4^{|\setu|} N}
	\qquad\mbox{and}\qquad
	\frac{1}{N^{2}}\sum^{N-1}_{k=1} |J_{N;k,0} |\leq \frac{1}{4^{|\setu|}N}.
\]
Finally noting $(J_{N;k,k})^{|\setu|}\geq 0$ yields the desired result.
\end{proof}

\subsection{Estimates for $J_{N;k,k'}$}

In this subsection, we derive estimates for $J_{N;k,k'}$ for $k,k'\ge 1$.
In the following we will make use of the Fourier series for the real
$1$-periodic sawtooth function, defined on $[0,1)$ by
\[
 b(x):=\begin{cases}
 x-1/2 & \mbox{if } x\in(0,1),\\
 0 & \mbox{if } x=0,
\end{cases}
\]
and then extended to the whole of $\bbR$ by $b(x) = b(x+1)$ for all
$x\in\bbR$. Thus $b(x)$ is the periodic version of the first-degree
Bernoulli polynomial $B_1(x) = x-1/2$. It is well known (following, for
example, from the Dini criterion) that the symmetric partial sums in its
Fourier series converge to $b(x)$ pointwise for all $x\in \mathbb{R}$,
that is
\[
b(x) = \lim_{M\to\infty} \frac{\mathrm{i}}{2\pi}
\sum_{\substack{h=-M\\h\ne 0}}^M \frac{\exp(2\pi \mathrm{i} h x)}{h},\quad x \in \mathbb{R}.
\]
For notational simplicity we shall often omit the limit, writing simply
\[
b(x) = \frac{\mathrm{i}}{2\pi}
\sum_{h\ne 0} \frac{\exp(2\pi \mathrm{i} h x)}{h},\quad x \in \mathbb{R},
\]
but this is always to be understood as the limit of the symmetric partial
sum.

We have the following expression for $J_{N;k,k'}$, $k,k'\in \{1,\dotsc
,N-1\}$.
\begin{lemma}\label{lem:identity-J}
For $N$ prime and $k,k'\in \{1,\dotsc ,N-1\}$, the quantity $J_{N;k,k'}$
defined in \eqref{eq:def-JNkk'} satisfies
\begin{align}
		J_{N;k,k'}&
		=
		\frac{1}{4\pi^{2}}\frac{N}{N-1}
		\sum_{h \neq 0}
		\sum_{ \substack{
				h'\neq 0\\
				h'k'\equiv_N\, hk
		}}
		\frac{1}{hh'},\label{eq:J-nonzero}
\end{align}
where the double sum is to be in interpreted as the double limit
\begin{equation*}
		\sum_{h\neq 0}
		\sum_{\substack{
				h'\neq 0\\
				h'k' \equiv_N\, hk
		}}
		\frac{1}{hh'}
		:=
		\lim\limits_{M\to\infty}
		\lim\limits_{M'\to\infty}
		\sum_{h\in\{-M,\dots,M\}\setminus\{0\}}
		\sum_{\substack{
				h' \in\{-M' ,\dots,M' \}\setminus\{0\}\\
				h'k' \equiv_N\, hk
		}}
		\frac{1}{hh'}.
\end{equation*}
\end{lemma}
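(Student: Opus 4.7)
Since $N$ is prime and $k, k', z \in \{1,\dots,N-1\}$, neither $kz/N$ nor $k'z/N$ is an integer, so $\{kz/N\}-\tfrac12 = b(kz/N)$ and similarly for $k'z/N$. Hence
\[
J_{N;k,k'} = \frac{1}{N-1}\sum_{z=1}^{N-1} b(kz/N)\, b(k'z/N).
\]
I would insert the symmetric-partial-sum Fourier representation of $b$ stated in the excerpt into each factor, with truncation parameters $M$ and $M'$ respectively, and pull both limits outside the finite sum over $z$. This yields
\[
\sum_{z=1}^{N-1} b(kz/N)\,b(k'z/N) = -\frac{1}{4\pi^{2}}\lim_{M\to\infty}\lim_{M'\to\infty}\sum_{0<|h|\le M}\sum_{0<|h'|\le M'}\frac{1}{hh'}\sum_{z=1}^{N-1} e^{2\pi\mathrm{i}(hk+h'k')z/N}.
\]

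Next I would evaluate the innermost geometric sum in $z$: it equals $N-1$ when $hk+h'k'\equiv_N 0$ and $-1$ otherwise, i.e.\ $N\,\mathbf 1_{\{hk+h'k'\equiv_N 0\}} - 1$. Splitting accordingly produces two pieces. The contribution of the constant $-1$ factorises at every finite cutoff as $\bigl(\sum_{0<|h|\le M}1/h\bigr)\bigl(\sum_{0<|h'|\le M'}1/h'\bigr)$, and each factor is exactly zero by the $h\mapsto -h$ symmetry, so this piece vanishes in the limit. The surviving piece is a sum restricted to $hk+h'k'\equiv_N 0$; replacing $h'$ by $-h'$ converts this congruence into $h'k'\equiv_N hk$ and inserts a minus sign through $1/(hh')\mapsto -1/(hh')$, which cancels the overall minus in the prefactor. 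Dividing by $N-1$ then gives the identity \eqref{eq:J-nonzero}.

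The delicate point is that the double series $\sum_{h,h'}1/(hh')$ is only conditionally convergent, so terms cannot be rearranged freely; this is precisely why the target identity is stated as an iterated limit. The plan handles this by keeping everything inside the fixed rectangular partial sum $\{0<|h|\le M,\,0<|h'|\le M'\}$ throughout: the pull-out of the two limits is legitimate because the outer sum over $z$ has only $N-1$ terms, and the cancellation of the non-matching contribution happens exactly at every finite cutoff rather than merely in the limit. No subtler Abel- or Cesàro-type argument is then required.
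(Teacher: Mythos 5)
Your proof is correct and follows essentially the same route as the paper's: Fourier-expand both sawtooth factors as iterated symmetric partial sums, pull the finite $z$-sum inside, evaluate the character sum, and kill the unconstrained piece by the odd symmetry of $\sum 1/h$ at each finite cutoff. The only (cosmetic) difference is that the paper conjugates one of the two Fourier series at the outset, so the exponent comes out as $hk-h'k'$ directly, whereas you keep both series in the same form and perform the substitution $h'\mapsto -h'$ at the end; the resulting sign cancels the $-1/(4\pi^2)$ prefactor exactly as you note.
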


\begin{proof}
For $(x,y)\in(0,1)^2$ we have
		\begin{align*}
		B_{1}( x)B_{1}( y)
		&\,=
		\frac{1}{4\pi^2 }
		\sum_{h\not{=} 0}\sum_{h'\not{=} 0}
		\frac{e^{2\pi \mathrm{i}hx}}{h}
		\frac{e^{-2\pi \mathrm{i}h'y}}{h'} \\
		&:=
		\lim_{M\to\infty}
		\lim_{M'\to\infty}
		\frac{1}{4\pi^2 }
		\sum_{h \in\{-M ,\dots,M \}\setminus\{0\}}
		\sum_{h'\in\{-M',\dots,M'\}\setminus\{0\}}
		\frac{e^{2\pi \mathrm{i}hx}}{h}
		\frac{e^{-2\pi \mathrm{i}h'y}}{h'}.
\end{align*}
Thus for any $k,k'=1,\dots,N-1$ we have, noting that the finite sum over
$z$ may be interchanged with the implied limits,
\begin{align*}	
  J_{N;k,k'}
		& =\frac{1}{4\pi^{2}(N-1)}
		\sum_{h\not{=} 0}\sum_{h'\not{=} 0}
		\frac{1}{hh'} \sum^{N-1}_{z=1}
  \exp\bigg(2\pi \mathrm{i}\Big(\frac{hk-h'k'}{N}\Big) z\bigg) \\
		& =-\frac{1}{4\pi^{2}(N-1)}
		\sum_{h\not{=} 0}\sum_{h'\not{=} 0}
		\frac{1}{hh'}
		+
		\frac{1}{4\pi^{2}(N-1)}
		\sum_{h\not{=} 0}\sum_{h'\not{=} 0}
		\frac{1}{hh'} \sum ^{N-1}_{z=0}
  \exp\bigg(2\pi \mathrm{i}\Big(\frac{hk-h'k'}{N}\Big) z\bigg).
\end{align*}
The first term vanishes because it has as a factor the limit of the
product of symmetric partial sums of the odd function $1/h$. For the
second term we use
\begin{equation*}
		\sum ^{N-1}_{z=0} \exp(2\pi \mathrm{i} z(hk-h'k')/N) =
		\begin{cases}
		N & \text{if } hk-h'k'\equiv_N 0,\\
		0 & \text{if } hk-h'k'\not\equiv_N 0,
		\end{cases}
\end{equation*}
which leads to the desired formula.
\end{proof} 	

We now want to estimate $J_{N;k,k'}$ for $k,k'\ge 1$ using
\eqref{eq:J-nonzero}. It turns out that it suffices to consider
$J_{N;\kappa,1}$, for $\kappa=1,\dots,N-1$.
\begin{proposition}\label{prop:bd-e2-by-Jkappa1}
For $N$ prime, the quantity $\overline{e}^{2}_{\setu}(N)$ defined in
\eqref{eq:e2binom} satisfies
\begin{equation}
		\overline{e}^{2}_{\setu}(N)
		\le c_{\setu}
		\frac1N
		+
		\frac{1}{N}
		\sum_{\kappa=1}^{N-1}
		|J_{N;\kappa,1}|^{|\setu|}.
		\label{eq:bd-e2-by-Jkappa1}
\end{equation}
\end{proposition}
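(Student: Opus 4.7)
The goal is to reduce the double sum in Proposition~\ref{prop:bd-1st-term-ofe2} to a single sum over $\kappa \in \{1,\dots,N-1\}$, by exploiting the multiplicative symmetry of $J_{N;k,k'}$ that is available because $N$ is prime. Starting from Proposition~\ref{prop:bd-1st-term-ofe2}, and using the trivial bound $(J_{N;k,k'})^{|\setu|} \le |J_{N;k,k'}|^{|\setu|}$, it suffices to show
\[
\frac{1}{N^{2}} \sum_{k=1}^{N-1}\sum_{k'=1}^{N-1} |J_{N;k,k'}|^{|\setu|}
\;\le\; \frac{1}{N} \sum_{\kappa=1}^{N-1} |J_{N;\kappa,1}|^{|\setu|}.
\]

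The key observation is a modular-inverse substitution inside the constraint appearing in Lemma~\ref{lem:identity-J}. Since $N$ is prime and $k' \in \{1,\dots,N-1\}$, $k'$ is invertible modulo $N$; writing $\kappa := k(k')^{-1} \bmod N$, the constraint $h'k' \equiv_N hk$ becomes $h' \equiv_N h\kappa$, which is precisely the constraint appearing in the formula for $J_{N;\kappa,1}$. The summands $1/(hh')$ do not depend on $k, k'$, so the $(h,h')$ double limit reorganises identically in both cases, yielding $J_{N;k,k'} = J_{N;\kappa,1}$. Moreover $\kappa \in \{1,\dots,N-1\}$ because $k$ is coprime to $N$.

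Finally, for each fixed $k' \in \{1,\dots,N-1\}$ the map $k \mapsto k(k')^{-1} \bmod N$ is a bijection on $\{1,\dots,N-1\}$, so
\[
\sum_{k=1}^{N-1} |J_{N;k,k'}|^{|\setu|} = \sum_{\kappa=1}^{N-1} |J_{N;\kappa,1}|^{|\setu|},
\]
independently of $k'$. Summing over the $N-1$ values of $k'$ and dividing by $N^2$ gives a prefactor $(N-1)/N^2 \le 1/N$, which combined with Proposition~\ref{prop:bd-1st-term-ofe2} yields \eqref{eq:bd-e2-by-Jkappa1}.

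The only minor subtlety will be to confirm that the substitution is legitimate inside the nested double limit over $M, M' \to \infty$ in Lemma~\ref{lem:identity-J}; because the substitution affects only the indexing congruence and not the summand $1/(hh')$, the symmetric partial sums transform identically and the limits are unaffected. This step is routine but worth a line in the written proof.
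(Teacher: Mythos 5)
Your proof is correct and takes essentially the same approach as the paper: the modular-inverse substitution $\kappa := k(k')^{-1} \bmod N$ giving $J_{N;k,k'} = J_{N;\kappa,1}$, followed by the observation that this is a bijection, is exactly the paper's argument (the paper holds $k$ fixed and re-indexes over $k'$, while you hold $k'$ fixed and re-index over $k$, but these are interchangeable). Taking the absolute value bound $(J_{N;k,k'})^{|\setu|}\le|J_{N;k,k'}|^{|\setu|}$ up front as you do is a harmless and slightly cleaner ordering of the same steps.
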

\begin{proof}
Because $N$ is prime, for each $k'\in \{1,\ldots,N-1\}$ there is a unique
inverse ${k'}^{-1} \in \{1,\dotsc ,N-1\}$ such that $k'{k'}^{-1} \equiv_N
1$, and therefore
\begin{equation*}
		h'k'\equiv_N hk
		\quad\Leftrightarrow \quad h'\equiv_N h ( k{k'}^{-1}).
\end{equation*}
It follows from \eqref{eq:J-nonzero} that
\begin{equation*}
		J_{N;k,k'} =J_{N; \kappa,1} ,
 \qquad\mbox{with}\qquad
	\kappa :=  k{k'}^{-1} \mod{N},
\end{equation*}
and since $\kappa$ runs over all of $\{1,\dots,N-1\}$ as $k'$ runs over
$\{1,\dots,N-1\}$, we have
\[
		\frac1{N^2}\sum_{k=1}^{N-1}\sum_{k'=1}^{N-1}
		(J_{N;k,k'})^{|\setu|}
		=
		\frac{N-1}{N^2}	\sum_{\kappa=1}^{N-1} (J_{N;\kappa,1})^{|\setu|}
		\le
		\frac{1}{N} \sum_{\kappa=1}^{N-1} |J_{N;\kappa,1}|^{|\setu|}.
\]
Applying this to Proposition~\ref{prop:bd-1st-term-ofe2} yields the
desired result.
\end{proof}

From Lemma~\ref{lem:identity-J} we have
\begin{align} \label{eq:JNkappa1}
	J_{N;\kappa,1}
    &= \frac{1}{4\pi^{2}}\frac{N}{N-1}
	\sum_{h\neq 0}
	\sum_{\substack{h'\neq 0\\ h'\equiv_N \, h\kappa}}
	\frac{1}{hh'}
    = \frac{1}{4\pi^{2}}\frac{N}{N-1}
    \lim_{M\to\infty} \lim_{M'\to\infty} S(M,M'),
\end{align}
where
\begin{align} \label{eq:S-def}
    S(M,M') :=
    	\sum_{h\in\{-M,\dots,M\}\setminus\{0\}}
    \sum_{\substack{
			h'\in\{-M',\dots,M'\}\setminus\{0\}\\
			h'\equiv_N h\kappa
	}}
	\frac{1}{hh'}.
\end{align}
To further simplify $J_{N;\kappa,1}$, we note that for $h,h'$ satisfying
$h'\equiv_N h\kappa$ with $\kappa \in \{1,\ldots,N-1\}$ we have
\begin{equation*}
	h\equiv_N 0 \ \ \ \Leftrightarrow \ \ \ h\kappa \equiv_N 0 \ \ \ \Leftrightarrow \ \ \ h'\equiv_N 0.
\end{equation*}
Hence, for the $h\equiv_N 0$ contribution to the double sum
\eqref{eq:S-def}  we have
\begin{equation*}
	\sum_{\substack{
			h\in\{-M,\dots,M\}\setminus\{0\}\\
			h\equiv_N 0
	}}
	\frac{1}{h}
	\sum_{\substack{
			h'\in\{-M',\dots,M'\}\setminus\{0\}\\
			h'\equiv_N 0
	}}\frac{1}{h'}
	=0.
\end{equation*}
Thus, we can restrict the double sum \eqref{eq:S-def} to $h\not\equiv_N 0$
so that
\begin{align}
	S(M,M') =
    \sum_{\substack{
			h\in\{-M,\dots,M\}\setminus\{0\}\\
			h\not\equiv_N 0
	}}
    \sum_{\substack{
			h'\in\{-M',\dots,M'\}\setminus\{0\}\\
			h'\equiv_N\, h\kappa
    }}
	\frac{1}{hh'}.
	\label{eq:S-simp}
\end{align}

We now assume $N\ge 3$ so that $N-1$ is even for $N$ prime. We can write
$h\not\equiv_N 0$ as
\begin{equation}
 h=\ell N+q,\quad\mbox{with}\quad \ell\in\mathbb{Z} \quad\mbox{and}\quad
 q\in \left\{-\tfrac{N-1}{2} ,...,\tfrac{N-1}{2}\right\} \setminus \{0\} =:R_N.
	\label{eq:h'-with-ell'}
\end{equation}
Then, we can write $h'\equiv_N h\kappa$ with $h\not\equiv_N 0$ as
\[
  h'=\ell' N+ r(q\kappa,N), \quad\mbox{with}\quad \ell'\in\mathbb{Z},
\]
where $r(j,N)$ is the unique integer congruent to $j\bmod N$ with the
smallest magnitude. More precisely, the function
$r(\cdot,N)\colon\mathbb{Z} \to R_N \cup \{0\}$ is defined for $j\ge 0$ by
\begin{align} \label{eq:r-def}
	r(j,N):=
	\begin{cases}
	j \bmod N
	& \text{ if}\quad j\bmod N \le \frac{N-1}{2},
	\\
	j \bmod N - N
	& \text{ if}\quad j\bmod N > \frac{N-1}{2},
	\end{cases}
\end{align}
and extended to all integers $j$ by $r(j,N) =r(j+N,N)$. It follows that
for $j>0$ we have $r(-j,N) = r(N-j\bmod N,N) = -r(j,N)$. Hence the
function is both $N$-periodic and odd. If $N$ divides $j$, then we have
$r(j,N)= 0$, but otherwise $r(j,N) \in R_N$.

Using these representations of $h$ and $h'$, the double limit in
$J_{N;\kappa,1}$ as in \eqref{eq:JNkappa1} can be rewritten as follows.

\begin{lemma}\label{lem:J1}
For $N\ge 3$ prime and $\kappa\in\{1,\dots,N-1\}$, the quantity
$J_{N;\kappa,1}$ given by \eqref{eq:JNkappa1} satisfies
\begin{align} \label{eq:J1}
 J_{N;\kappa,1} =
 \frac{1}{2\pi^{2}}\frac{N}{N-1} \sum_{q=1}^{(N-1)/2}
 \Bigg(\frac{1}{q} - \sum_{\ell=1}^\infty \frac{2q}{(\ell N)^2-q^2 }\Bigg)
 \Bigg(\frac{1}{r(q\kappa,N)} - \sum_{\ell'=1}^\infty \frac{2\,r(q\kappa,N)}{(\ell' N)^2-r(q\kappa,N)^2 }\Bigg),
\end{align}
where $r(\cdot,N)$ is defined as in \eqref{eq:r-def}.
\end{lemma}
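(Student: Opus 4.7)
The plan is to start from the representation \eqref{eq:JNkappa1}--\eqref{eq:S-simp} and perform the change of variables $h = \ell N + q$ (with $q \in R_N$ and $\ell \in \mathbb{Z}$) and $h' = \ell' N + r(q\kappa,N)$ (with $\ell' \in \mathbb{Z}$) suggested in \eqref{eq:h'-with-ell'}. Since $h\not\equiv_N 0$ is equivalent to $q\in R_N$, and the congruence $h' \equiv_N h\kappa$ reduces to $h' \equiv_N q\kappa$ (which depends only on $q$, not on $\ell$), the double sum factorises for each fixed $q$:
\[
S(M,M') \;=\; \sum_{q\in R_N} \Bigg(\sum_{\ell:\,|\ell N + q|\le M} \frac{1}{\ell N+q}\Bigg)\Bigg(\sum_{\ell':\,|\ell' N + r(q\kappa,N)|\le M'} \frac{1}{\ell' N+r(q\kappa,N)}\Bigg).
\]

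I would then evaluate the inner limit $M'\to\infty$ for fixed $q$ by the symmetric pairing $\ell' \leftrightarrow -\ell'$, using
\[
\frac{1}{\ell' N + r(q\kappa,N)} + \frac{1}{-\ell' N + r(q\kappa,N)} \;=\; -\frac{2\,r(q\kappa,N)}{(\ell' N)^2 - r(q\kappa,N)^2},
\]
together with the $\ell'=0$ term $1/r(q\kappa,N)$. This yields the closed-form factor displayed in \eqref{eq:J1}. The analogous pairing $\ell \leftrightarrow -\ell$ in the outer limit $M\to\infty$ (which is legitimate since $|q|\le (N-1)/2 < N/2$, so $q=0$ never occurs and the range of admissible $\ell$ is asymptotically symmetric) produces the matching factor in $q$. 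Both resulting infinite series converge absolutely, each general term being $O(\ell^{-2})$.

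Finally, I use the symmetry $q\mapsto -q$ to reduce $R_N$ to $\{1,\dots,(N-1)/2\}$. The $q$-factor is manifestly odd in $q$, and since $r(\cdot,N)$ is odd the factor involving $r(q\kappa,N)$ is odd as well; hence the product is even in $q$. Pairing $q$ with $-q$ yields a factor of $2$, which combines with the prefactor $\tfrac{1}{4\pi^2}\tfrac{N}{N-1}$ to produce $\tfrac{1}{2\pi^2}\tfrac{N}{N-1}$, matching the stated formula.

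The main obstacle is the careful bookkeeping around the two iterated limits: the pairing trick requires fully symmetric partial sums in $\ell$ and $\ell'$, but the restrictions $|h|\le M$ and $|h'|\le M'$ intersected with fixed residue classes modulo $N$ are symmetric only up to at most one boundary index. I would control this by showing that the at most $O(1)$ unpaired endpoints contribute $O(1/M)$ and $O(1/M')$ respectively, so that the iterated limit $\lim_{M\to\infty}\lim_{M'\to\infty}$ coincides with the two symmetric limits produced by the pairing argument. Once this is in place, the computation itself is essentially algebraic.
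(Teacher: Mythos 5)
Your proof follows essentially the same route as the paper's: decompose $h=\ell N+q$ and $h'=\ell'N+r(q\kappa,N)$ so that the double sum factorises over $q\in R_N$, evaluate each inner sum by symmetric pairing $\ell\leftrightarrow -\ell$ (using the oddness argument to control the $O(1)$ unpaired boundary terms), and finally exploit the oddness of $q\mapsto P_N(q)$ and of $r(\cdot,N)$ to fold $R_N$ onto $\{1,\dots,(N-1)/2\}$ and absorb the factor of $2$. This matches the paper's argument in both structure and the key estimates, so the proposal is correct.
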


\begin{proof}
We begin with the expression \eqref{eq:JNkappa1} for $J_{N;\kappa,1}$.
Writing $M =L N + Q$ and $M'=L'N+Q'$ with $L,L'\in\bbN$ and $Q,Q'\in
R_N\cup\{0\}$, the double sum \eqref{eq:S-simp} can be rewritten as
\begin{align} \label{eq:S-prod}
 S(M,M')
 &= \sum_{\substack{\ell\in\bbZ,\, q\in R_N\\|\ell N+q|\le LN+Q}}\;
   \sum_{\substack{\ell'\in\bbZ,\, q'\in R_N\\ |\ell' N+q'|\le L'N+Q'\\ q'\equiv_N\, q\kappa}}
   \frac{1}{\ell N+q}\; \frac{1}{\ell' N + q'} \nonumber\\
 &= \sum_{\satop{q,q'\in R_N}{q'\equiv_N\, q\kappa}}
 \Bigg(\sum_{\satop{\ell=-L}{|\ell N+q| \le LN+Q}}^L \frac{1}{\ell N+q }\Bigg)
 \Bigg(\sum_{\satop{\ell'=-L'}{|\ell' N+q'| \le L'N+Q'}}^{L'} \frac{1}{\ell' N + q'}\Bigg),
\end{align}
where we used the fact that the inequalities in the summation conditions
cannot hold if $|\ell|> L$ or $|\ell'|> L'$.

First we consider the sum over $\ell$ in \eqref{eq:S-prod}. Since the
condition $|\ell N+q|\le LN + Q$ always holds for $|\ell|\le L-1$, we can
write
\[
  \sum_{\satop{\ell=-L}{|\ell N+q| \le LN+Q}}^L \frac{1}{\ell N+q }
  = \sum_{\ell=-L}^L \frac{1}{\ell N+q } - \sum_{\satop{\ell=\pm L}{|\ell N+q| > LN+Q}} \frac{1}{\ell N+q},
\]
where we have
\[
  \sum_{\ell=-L}^L \frac{1}{\ell N+q }
  = \frac{1}{q} + \sum_{\ell=1}^L \left(\frac{1}{\ell N+q } + \frac{1}{-\ell N+q }\right)
  = \frac{1}{q} - \sum_{\ell=1}^L \frac{2q}{(\ell N)^2-q^2 }
\]
and
\[
  \Bigg|\sum_{\satop{\ell=\pm L}{|\ell N+q| > LN+Q}} \frac{1}{\ell N+q} \Bigg|
  \le \frac{2}{LN+Q}
  \le \frac{2}{LN - N/2}
  \to 0 \qquad\mbox{as}\qquad L\to\infty.
\]
Thus we conclude that
\[
  \lim_{L\to\infty} \sum_{\satop{\ell=-L}{|\ell N+q| \le LN+Q}}^L \frac{1}{\ell N+q }
  = \frac{1}{q} - \lim_{L\to\infty} \sum_{\ell=1}^L \frac{2q}{(\ell N)^2-q^2 }
  = \frac{1}{q} - \sum_{\ell=1}^\infty \frac{2q}{(\ell N)^2-q^2 }
  =: P_N(q).
\]
The sum over $\ell'$ in \eqref{eq:S-prod} is similar.

Now since the double limit of $S(M,M')$ exists as $M\to\infty$ and
$M'\to\infty$, it must equal the double limit of the last expression in
\eqref{eq:S-prod} as $L\to\infty$ and $L'\to\infty$, with arbitrary $Q$
and~$Q'$. (This is because for a particular pair $(Q,Q')$, the last
expression in \eqref{eq:S-prod}, when interpreted as a sequence in the
double index $(L,L')$, can be considered as a subsequence of the
convergent sequence $S(M,M')$ with double index $(M,M')$.) Hence we obtain
\[
  \lim_{M\to\infty} \lim_{M'\to\infty} S(M,M')
  = \sum_{\satop{q,q'\in R_N}{q'\equiv_N\, q\kappa}} P_N(q)\, P_N(q')
  = \sum_{q\in R_N} P_N(q)\, P_N(r(q\kappa,N)),
\]
where we used the fact that for a given $q\in R_N$, the only value
of $q'\in R_N$ that satisfies $q'\equiv_N q\kappa$ is $q' = r(q\kappa,N)$.

Finally, we observe that $P_N(-q) = - P_N(q)$, and $P_N(r(-q\kappa,N)) =
-P_N(r(q\kappa,N))$ since $r(-q\kappa,N) = -r(q\kappa,N)$. Thus the
contributions of $q$ and $-q$ to the sum are the same, and so we only need
to sum over the positive values of $q$ and then double the result.
Applying the result in \eqref{eq:JNkappa1} completes the proof.
\end{proof}		

Now we estimate the magnitude of $J_{N;\kappa,1}$.

\begin{lemma}\label{lem:J1-bound}
For $N\ge 3$ prime and $\kappa\in\{1,\dots,N-1\}$, the quantity
$J_{N;\kappa,1}$ from \eqref{eq:J1} satisfies
\begin{equation*}
  |J_{N;\kappa,1}| \le \frac{1}{2\pi^2} \frac{N}{N-1}
  \left(T_N(\kappa) + \frac{10\pi^2 \ln N}{9N}\right),
\end{equation*}
where
\begin{equation} \label{eq:T-def}
  T_N(\kappa) := \sum_{q=1}^{(N-1)/2} \frac{1}{q\,|r(q\kappa,N)|} < \frac{\pi^2}{6}.
\end{equation}
\end{lemma}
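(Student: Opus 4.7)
My plan is to start from the identity in Lemma~\ref{lem:J1} and decompose each factor inside the $q$-sum as a main term minus a tail. For $q\in\{1,\dots,(N-1)/2\}$, set $A_q:=1/q$, $A'_q := 1/r(q\kappa,N)$, $B_q := \sum_{\ell\ge 1}2q/((\ell N)^2-q^2)$, and $B'_q := \sum_{\ell'\ge 1}2\,r(q\kappa,N)/((\ell' N)^2-r(q\kappa,N)^2)$. The first step is a uniform tail estimate: since both $q$ and $|r(q\kappa,N)|$ lie in $\{1,\dots,(N-1)/2\}$, the inequality $(\ell N)^2 - q^2 \ge \tfrac{3}{4}(\ell N)^2$ holds for every $\ell\ge 1$, and likewise with $r(q\kappa,N)$ in place of $q$. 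Summing $\sum_{\ell\ge 1}\ell^{-2} = \pi^2/6$ then immediately yields
\[
  |B_q| \le \frac{4\pi^2 q}{9N^2}, \qquad |B'_q| \le \frac{4\pi^2\,|r(q\kappa,N)|}{9N^2}.
\]

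Next I would expand $(A_q-B_q)(A'_q-B'_q) = A_qA'_q - A_qB'_q - B_qA'_q + B_qB'_q$ and apply the triangle inequality in Lemma~\ref{lem:J1}. The diagonal piece $|\sum_q A_q A'_q|$ is bounded by $T_N(\kappa)$ directly. For the three remaining pieces, the crucial number-theoretic input is that, because $N$ is prime and $\kappa$ is coprime to $N$, the map $\sigma\colon q\mapsto |r(q\kappa,N)|$ is a bijection of $\{1,\dots,(N-1)/2\}$ onto itself: equality $|r(q\kappa,N)|=|r(q'\kappa,N)|$ forces $q\equiv\pm q'\pmod N$, and the range $q,q'\le (N-1)/2$ rules this out unless $q=q'$. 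Bounding $|r(q\kappa,N)|\le (N-1)/2$ in one factor of each cross term and using the bijection to reduce $\sum_q 1/\sigma(q)$ to $H_{(N-1)/2} := \sum_{q=1}^{(N-1)/2} 1/q$, both $\sum_q|A_q B'_q|$ and $\sum_q|B_q A'_q|$ are at most $\tfrac{2\pi^2(N-1)}{9N^2}\,H_{(N-1)/2}$, while $\sum_q|B_q B'_q|$ is $O(1/N)$ (e.g.\ via AM-GM followed by the bijection). Combining these with the elementary bound $H_{(N-1)/2} \le \ln(N/2)+1$ and collecting the prefactors, a routine bookkeeping of constants yields the claimed factor $10\pi^2/9$. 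The main obstacle is precisely this careful accounting of the numerical constants; conceptually, once the tail estimate and the bijection are in hand, nothing deeper is required.

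For the second assertion $T_N(\kappa)<\pi^2/6$, I would reuse the bijection $\sigma$ together with the elementary inequality $1/(q\,\sigma(q)) \le \tfrac{1}{2}(1/q^2+1/\sigma(q)^2)$. Summing over $q$ and using the bijection to rewrite the second term gives
\[
  T_N(\kappa) \le \frac{1}{2}\Bigg(\sum_{q=1}^{(N-1)/2}\frac{1}{q^2}+\sum_{q=1}^{(N-1)/2}\frac{1}{\sigma(q)^2}\Bigg) = \sum_{q=1}^{(N-1)/2}\frac{1}{q^2} < \frac{\pi^2}{6},
\]
completing the plan.
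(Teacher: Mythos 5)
Your proof plan matches the paper's almost exactly: you expand the product $(A_q-B_q)(A'_q-B'_q)$, use the identical tail estimate based on $(\ell N)^2-q^2\ge\tfrac34(\ell N)^2$, invoke the same bijection $q\mapsto|r(q\kappa,N)|$ on $\{1,\dots,(N-1)/2\}$ to handle the cross terms via the harmonic sum, and then absorb the $O(1/N)$ tail-tail contribution into the $\ln N$ term using $\ln N\ge\ln 3$. The only cosmetic differences are that you keep the factor $q$ (resp.\ $|r(q\kappa,N)|$) in the tail bound before estimating by $(N-1)/2$ whereas the paper bounds it immediately, and you derive $T_N(\kappa)<\pi^2/6$ via AM--GM instead of Cauchy--Schwarz; both variants yield the same numerical constants.
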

\begin{proof}
We expand the two factors in the sum over $q$ in \eqref{eq:J1} and then
apply the triangle inequality to obtain
\[
  |J_{N;\kappa,1}| \le \frac{1}{2\pi^2} \frac{N}{N-1} \big(T_N(\kappa) + A_1 + A_2 + A_3\big),
\]
with
\begin{align*}
  A_1 &:= \sum_{q=1}^{(N-1)/2} \frac{1}{|r(q\kappa,N)|} \Bigg(\sum_{\ell=1}^\infty \frac{2q}{(\ell N)^2-q^2 }\Bigg), \\
  A_2 &:= \sum_{q=1}^{(N-1)/2} \frac{1}{q} \Bigg(\sum_{\ell'=1}^\infty \frac{2\,|r(q\kappa,N)|}{(\ell' N)^2-r(q\kappa,N)^2 }\Bigg), \\
  A_3 &:= \sum_{q=1}^{(N-1)/2} \Bigg(\sum_{\ell=1}^\infty \frac{2q}{(\ell N)^2-q^2 }\Bigg)
  \Bigg(\sum_{\ell'=1}^\infty \frac{2\,|r(q\kappa,N)|}{(\ell' N)^2-r(q\kappa,N)^2 }\Bigg).
\end{align*}
Since $q\le N/2\le \ell N/2$ and $|r(q\kappa,N)|\le N/2\le\ell' N/2$, we
have
\[
  \sum_{\ell=1}^\infty \frac{2q}{(\ell N)^2-q^2 }
  \le \sum_{\ell=1}^\infty \frac{N}{(\ell N)^2-(\ell N/2)^2 }
  = \frac{4}{3N} \sum_{\ell=1}^\infty \frac{1}{\ell^2}
  = \frac{2\pi^2}{9N},
\]
and
\[
  \sum_{\ell'=1}^\infty \frac{2\,|r(q\kappa,N)|}{(\ell' N)^2-r(q\kappa,N)^2}
  \le \sum_{\ell'=1}^\infty \frac{N}{(\ell' N)^2-(\ell' N/2)^2 }
  = \frac{2\pi^2}{9N}.
\]
Moreover, we have
\[
  \sum_{q=1}^{(N-1)/2} \frac{1}{q} \le 1 + \int_1^{(N-1)/2} \frac{1}{t}\,\rd t \le 2\ln N
  \quad\mbox{and}\quad
  \sum_{q=1}^{(N-1)/2} \frac{1}{|r(q\kappa,N)|} = \sum_{t=1}^{(N-1)/2} \frac{1}{t} \le 2\ln N,
\]
where in the prenultimate step we used the fact that $|r(q\kappa,N)|$
takes all the values from $1$ to $(N-1)/2$ exactly once as $q$ runs from
$1$ to $(N-1)/2$. These estimates lead to
\[
  A_1 + A_2 + A_3
  \le \frac{4\pi^2\,\ln N}{9N} + \frac{4\pi^2\,\ln N}{9N} + \frac{N-1}{2}\frac{4\pi^4}{81N^2}
  \le \frac{\pi^2\,\ln N}{9N} \left(8 + \frac{2\pi^2}{9\ln 3}\right)
  \le \frac{10\pi^2\,\ln N}{9N}.
\]
On the other hand, a crude estimate for $T_N(\kappa)$ follows from the
Cauchy-Schwarz inequality:
\begin{align*}
		T_N(\kappa) &
		\leq \Bigg(\sum ^{(N-1) /2}_{q=1}\frac{1}{q^{2}}\Bigg)^{1/2}
        \Bigg(\sum ^{(N-1) /2}_{q=1}\frac{1}{r(q\kappa,N)^{2}}\Bigg)^{1/2}
		=\Bigg(\sum ^{(N-1)/2}_{q=1}\frac{1}{q^{2}}\Bigg)^{1/2}
       \Bigg(\sum ^{(N-1) /2}_{t=1}\frac{1}{t^{2}}\Bigg)^{1/2}
        < \frac{\pi^{2}}{6}.
\end{align*}
This completes the proof.
\end{proof}

Numerical experiments show that the value of $T_N(\kappa)$ is much smaller
than the crude bound $\pi^2/6$ for most values of $\kappa$, and have led
us to the following conjecture. Note that we have $r(q(N-\kappa),N) =
r(-q\kappa,N) = -r(q\kappa,N)$, and so $T_N(N-\kappa) = T_N(\kappa)$.
Moreover, from \eqref{eq:J1} we conclude that
\[
  J_{N;N-\kappa,1} = - J_{N;\kappa,1}.
\]
Since we are only interested in the magnitude of $J_{N;\kappa,1}$ (see
Proposition~\ref{prop:bd-e2-by-Jkappa1}), it suffices to consider only
$\kappa \in R_N^+ := \{1,2,\ldots,(N-1)/2\}$.

\begin{conjecture} \label{conj:alpha}
For $N\ge 3$ prime and $\kappa\in R_N^+$, with $T_N(\kappa)$ as defined in
\eqref{eq:T-def}, let $( \kappa_{j})$ for $j\in R^{+}_{N}$ be an ordering
of the elements of $R_N^+$ such that $(T_{N}(\kappa _{j}))$ is
non-increasing. The conjecture is that there exist $C_{1}, C_{2}>0$ and
$\alpha\ge 2$ independent of $N$ such that
\begin{equation}
		T_{N}(\kappa _{j}) \le C_{1}\frac{(\ln N)^{\alpha}}{N} \ \
\text{ for all } \ \ \ j >C_{2}\,(\ln N)^{\alpha}.\label{eq:conj}
\end{equation}
\end{conjecture}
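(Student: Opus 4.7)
The plan is to prove the conjecture by a moment argument, bounding $\sum_{\kappa}T_N(\kappa)^p$ tightly enough that Markov's inequality forces $T_N(\kappa)$ to be small outside a sparse exceptional set. Heuristically the average of $T_N$ over $R_N^+$ is only of order $(\ln N)^2/N$, vastly smaller than the crude pointwise bound $\pi^2/6$ of Lemma~\ref{lem:J1-bound}, so \eqref{eq:conj} amounts to a quantitative concentration statement for $T_N$ around its mean.

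The first step is the first moment. Swapping the order of summation in the definition \eqref{eq:T-def} gives
\[
\sum_{\kappa=1}^{(N-1)/2}T_N(\kappa)=\sum_{q=1}^{(N-1)/2}\frac{1}{q}\sum_{\kappa=1}^{(N-1)/2}\frac{1}{|r(q\kappa,N)|}.
\]
For fixed $q$, since $\gcd(q,N)=1$, the map $\kappa\mapsto q\kappa\bmod N$ is a bijection on $\{1,\dots,N-1\}$ and each value $t\in R_N^+$ arises at most twice as $|r(q\kappa,N)|$ as $\kappa$ ranges over $R_N^+$. Hence the inner sum is at most $2\ln N+O(1)$, giving $\sum_\kappa T_N(\kappa)=O((\ln N)^2)$. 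Markov's inequality then yields $\#\{\kappa:T_N(\kappa)>t\}\le C(\ln N)^2/t$; taking $t=C_1(\ln N)^\alpha/N$ leaves $O(N(\ln N)^{2-\alpha})$ exceptional values, far short of the $O((\ln N)^\alpha)$ demanded by \eqref{eq:conj}.

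To close the gap I would push to $p$-th moments for $p\ge 2$. Expanding
\[
\sum_{\kappa}T_N(\kappa)^p=\sum_{q_1,\dots,q_p}\prod_{i=1}^p\frac{1}{q_i}\sum_{\kappa}\prod_{i=1}^p\frac{1}{|r(q_i\kappa,N)|},
\]
the substitution $\kappa'=q_1\kappa\bmod N$ normalises one factor and reduces the inner sum to an analogous $(p-1)$-fold quantity in the residues $r(q_iq_1^{-1}\kappa',N)$. A recursive bound of the shape $\sum_\kappa T_N(\kappa)^p\le C^p(\ln N)^{O(p)}/N^{p-1}$, combined with a choice of $p$ tending slowly to infinity with $N$ and Markov at level $t=C_1(\ln N)^\alpha/N$, should then deliver \eqref{eq:conj} for some $\alpha\ge 2$.

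The decisive obstacle is controlling this recursion. The joint distribution of the residues $r(q_i\kappa,N)$ for different $q_i$ does not decouple in any obvious way, and $T_N(\kappa)$ is in fact tightly linked to the continued-fraction expansion of $\kappa/N$: the existence of $\kappa$ with anomalously large $T_N(\kappa)$ corresponds to fractions $\kappa/N$ with large partial quotients. Securing uniform-in-$N$ $p$-th moment bounds of the required strength therefore seems to demand either sharp input from the metric theory of continued fractions, or a combinatorial argument of the same calibre as known partial progress towards Zaremba's conjecture. This is presumably why the authors present \eqref{eq:conj} as a conjecture supported by the numerical evidence of Section~\ref{sec:num} rather than as a theorem.
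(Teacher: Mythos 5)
The statement you were asked to prove is a \emph{conjecture}: the paper presents it with no proof at all, offering only the numerical evidence of Section~\ref{sec:num}, so there is no paper argument to compare yours against. You correctly recognise this in your closing paragraph, and your identification of the underlying difficulty --- that anomalously large $T_N(\kappa)$ corresponds to $\kappa/N$ having large partial quotients, so that any proof would require continued-fraction or Zaremba-type input --- is a fair reading of why the authors left it open.

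That said, the moment strategy you sketch cannot work in the form stated, and it is worth being precise about why. Your first-moment computation is essentially right: for fixed $q$ with $\gcd(q,N)=1$, the quantity $|r(q\kappa,N)|$ takes each value in $\{1,\dots,(N-1)/2\}$ exactly once as $\kappa$ ranges over $R_N^+$ (not merely ``at most twice''), so $\sum_{\kappa}T_N(\kappa)=\bigl(\sum_{q}1/q\bigr)\bigl(\sum_{t}1/t\bigr)\asymp(\ln N)^2$, and Markov at threshold $(\ln N)^\alpha/N$ gives an exceptional set of size $O(N(\ln N)^{2-\alpha})$ --- useless, as you note. But the escalation to higher moments is a dead end, because the proposed bound $\sum_{\kappa}T_N(\kappa)^p\le C^p(\ln N)^{O(p)}/N^{p-1}$ is simply false. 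Taking $\kappa=1$ gives $T_N(1)=\sum_{q=1}^{(N-1)/2}q^{-2}\ge 1$, so $\sum_\kappa T_N(\kappa)^p\ge 1$ for every $p$; more structurally, the diagonal $q_1=\dots=q_p=q$ in your $p$-fold expansion already contributes $\sum_q q^{-p}\sum_\kappa r(q\kappa,N)^{-p}=\Theta(1)$ independently of $N$. Raw moments of $T_N$ therefore do not decay with $N$ at all, and since the Markov threshold $t=C_1(\ln N)^\alpha/N$ is below $1$, raising $p$ with fixed moment $\Theta(1)$ makes the bound $M_p/t^p\asymp N^p/(\ln N)^{p\alpha}$ \emph{worse}, not better. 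The conjecture is genuinely a sharp tail estimate --- that all but $O((\ln N)^\alpha)$ of the $\kappa$'s have $T_N(\kappa)$ within a logarithmic factor of the mean $\asymp(\ln N)^2/N$ --- and no argument that passes through unconditioned moments of $T_N$ can see the distinction between ``$O((\ln N)^\alpha)$ exceptional $\kappa$'' and ``$O(N^{1-\varepsilon})$ exceptional $\kappa$''. Some structural control on which $\kappa$ are exceptional (e.g.\ via the continued-fraction expansion of $\kappa/N$, as you suggest) seems unavoidable.
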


{Conjecture~\ref{conj:alpha}} together with Lemma~\ref{lem:J1-bound}
lead to an estimate for $|J_{N;\kappa_j,1}|$ of the following form:
\[
 |J_{N;\kappa_j,1}|
 \le
 \begin{cases}
 C_3  & \text{ for } j\le C_2\, (\ln N)^\alpha, \\
 C_4 \displaystyle\frac{(\ln N)^\alpha }{N} &\text{ for } j>C_2\, (\ln N)^\alpha,\\
 \end{cases}
\]
where $C_3$ and $C_4$ are known numerical constants. We will use this
bound in the next subsection to obtain the desired result for the mean of
the worst-case error.

\subsection{Final results}

Now we are ready to state our main results.

\begin{theorem}\label{thm:bd-bar-e-u-N}
Suppose that {Conjecture~\ref{conj:alpha}} holds with some
$\alpha\ge2$. For arbitrary $\setu\subseteq\{1:s\}$ and any prime number
$N\ge 3$ such that ${(\ln N)^{\alpha}}/{N}\le 1$, the quantity
$\overline{e}_{\setu}(N)$ defined in \eqref{eq:def-bar-e-u} satisfies
\begin{equation}
		\overline{e}_{\setu}(N)
		\le C_{\setu} \frac{(\ln N)^{\alpha/2}}{\sqrt{N}},
\end{equation}
where
\[
		C_{\setu}:=
		\sqrt{ c_{\setu}
			+ 2C_{2}\Big(\frac{23}{24}\Big)^{|\setu |}
			+\Big(
			\frac{3C_{1}}{4\pi^{2}} +\frac{5}{6}\Big)^{|\setu |}
        }.
\]
Here, the constant $c_{\setu}$ is as in
Proposition~\ref{prop:bd-1st-term-ofe2}, and $C_1,C_2$ are as in
{Conjecture~\ref{conj:alpha}}.
\end{theorem}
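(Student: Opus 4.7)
My plan is to start from Proposition~\ref{prop:bd-e2-by-Jkappa1}, which has already reduced the task to controlling $\frac{1}{N}\sum_{\kappa=1}^{N-1}|J_{N;\kappa,1}|^{|\setu|}$ on top of the harmless $c_\setu/N$ term. Using the observation $J_{N;N-\kappa,1}=-J_{N;\kappa,1}$ recorded just before Conjecture~\ref{conj:alpha}, I rewrite the sum as $\frac{2}{N}\sum_{j\in R_N^+}|J_{N;\kappa_j,1}|^{|\setu|}$, where $(\kappa_j)_{j\in R_N^+}$ is the ordering from the conjecture under which $T_N(\kappa_j)$ is non-increasing.

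I then split this sum into a head $j\le C_2(\ln N)^\alpha$ and a tail $j>C_2(\ln N)^\alpha$, and bound $|J_{N;\kappa_j,1}|$ on each piece via Lemma~\ref{lem:J1-bound}. For the tail, Conjecture~\ref{conj:alpha} supplies $T_N(\kappa_j)\le C_1(\ln N)^\alpha/N$; substituting this into Lemma~\ref{lem:J1-bound}, using $N/(N-1)\le 3/2$ for $N\ge 3$, and absorbing $\ln N$ into $(\ln N)^\alpha$ (legitimate since $\alpha\ge 2$ and $\ln N\ge 1$), I obtain $|J_{N;\kappa_j,1}|\le C_4\,(\ln N)^\alpha/N$ with $C_4=\frac{3C_1}{4\pi^2}+\frac{5}{6}$. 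Summing the $|\setu|$-th power over at most $(N-1)/2$ indices and using the hypothesis $(\ln N)^\alpha/N\le 1$ to absorb the redundant factors $((\ln N)^\alpha/N)^{|\setu|-1}$, the tail contributes at most $C_4^{|\setu|}(\ln N)^\alpha/N$. For the head I substitute the universal bound $T_N(\kappa)<\pi^2/6$ from \eqref{eq:T-def}, giving $|J_{N;\kappa,1}|\le \tfrac{1}{8}+\tfrac{5\ln N}{6N}$; the hypothesis $(\ln N)^\alpha/N\le 1$ with $\alpha\ge 2$ and $\ln N\ge 1$ forces $\ln N/N\le 1$, so this collapses to $|J_{N;\kappa,1}|\le 23/24=:C_3$, and the head contributes at most $2C_2\,C_3^{|\setu|}(\ln N)^\alpha/N$.

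Adding the three pieces and using $(\ln N)^\alpha\ge 1$ for $N\ge 3$ to absorb $c_\setu/N$ yields
\[
\overline{e}^{\,2}_\setu(N)\le \Big(c_\setu+2C_2\,C_3^{|\setu|}+C_4^{|\setu|}\Big)\frac{(\ln N)^\alpha}{N},
\]
and taking square roots gives the claim with the stated $C_\setu$. There is no deep obstacle here since the head/tail split is prescribed by the form of Conjecture~\ref{conj:alpha}; the only real delicacy is constant bookkeeping, as the clean values $23/24$ and $\frac{3C_1}{4\pi^2}+\frac{5}{6}$ emerge only after carefully exploiting the hypothesis $(\ln N)^\alpha/N\le 1$ together with $\alpha\ge 2$ to convert each spurious $\ln N/N$ factor from Lemma~\ref{lem:J1-bound} into either $1$ or $(\ln N)^\alpha$ as needed.
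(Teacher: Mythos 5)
Your proof is correct and follows essentially the same route as the paper's: starting from Proposition~\ref{prop:bd-e2-by-Jkappa1}, exploiting the symmetry $J_{N;N-\kappa,1}=-J_{N;\kappa,1}$, splitting the sum at $j=C_2(\ln N)^\alpha$, and bounding each piece via Lemma~\ref{lem:J1-bound} together with the crude $T_N<\pi^2/6$ bound for the head and Conjecture~\ref{conj:alpha} for the tail. The constants $23/24$ and $\frac{3C_1}{4\pi^2}+\frac56$ and the final bookkeeping coincide exactly with what the paper does.
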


\begin{proof}
From Proposition~\ref{prop:bd-e2-by-Jkappa1} together with
$J_{N;N-\kappa,1} = - J_{N;\kappa,1}$, we have
\begin{equation}
		\overline{e}^{2}_{\setu}(N)
		\le c_{\setu}
		\frac1N
		+
		\frac{2}{N}
		\sum_{j=1}^{(N-1)/2}
		|J_{N;\kappa_j,1}|^{|\setu|}.
		\label{eq:bd-e2-by-Jkappa1}
\end{equation}
For $j\leq C_{2}(\ln N)^{\alpha}$, we use $T_N(\kappa_j)\le \pi^2/6$, $\ln
N/N\le 1$ and $N/(N-1)\le 3/2$ in Lemma~\ref{lem:J1-bound} to obtain
\begin{equation*}
 |J_{N;\kappa_{j},1}|
 \le \frac{1}{2\pi^{2}}\frac{N}{N-1}\bigg( \frac{\pi^2}{6} +\frac{10\pi^{2}}{9}\frac{\ln N}{N}\bigg)
 \le \frac{1}{2\pi^{2}}\frac{3}{2}\bigg( \frac{\pi^2}{6} +\frac{10\pi^{2}}{9}\bigg)
 = \frac{23}{24}.
\end{equation*}
For $j > C_{2}(\ln N)^{\alpha }$, we use $\ln N\geq 1$, $N/(N-1)\le 3/2$
and {Conjecture~\ref{conj:alpha}} to obtain
\begin{align*}
 |J_{N;\kappa_{j},1}|
 \le \frac{1}{2\pi^{2}}\frac{N}{N-1}\bigg( C_{1}\frac{(\ln N)^{\alpha }}{N} +\frac{10\pi^{2}}{9}\frac{\ln N}{N}\bigg)
 \le \bigg( \frac{3C_1}{4\pi^2} +\frac{5}{6}\bigg)\frac{(\ln N)^{\alpha}}{N}.
\end{align*}
Combining these and using $(\ln N)^{\alpha }/{N} \leq 1$, we obtain
\begin{align*}
 \sum_{j=1}^{(N-1)/2} |J_{N;\kappa_j,1} |^{|\setu|}
 &\le \sum_{1\leq j\leq C_{2}(\ln N)^{\alpha }} \bigg(\frac{23}{24}\bigg)^{|\setu|}
 + \sum_{C_{2}(\ln N)^{\alpha } < j\leq (N-1) /2} \bigg( \frac{3C_1}{4\pi^2} +\frac{5}{6}\bigg)^{|\setu|}
   \bigg(\frac{(\ln N)^{\alpha}}{N}\bigg)^{|\setu|}
 \\
 &\le C_{2}(\ln N)^{\alpha }\left(\frac{23}{24}\right)^{|\setu |}
 + \frac{N-1}{2}\bigg( \frac{3C_1}{4\pi^2} +\frac{5}{6}\bigg)^{|\setu|} \frac{(\ln N)^{\alpha}}{N}\\
 &\le \bigg( C_{2}\left(\frac{23}{24}\right)^{|\setu |} +
		\frac{1}{2} \left(\frac{3C_{1}}{4\pi^{2}} + \frac{5}{6}\right)^{|\setu |}
	\bigg) (\ln N)^{\alpha }.
\end{align*}
This together with \eqref{eq:bd-e2-by-Jkappa1} yields the required
result.
\end{proof}

\begin{corollary}\label{cor:final-cor}
Suppose that {Conjecture~\ref{conj:alpha}} holds with some $\alpha\ge
2$. Let $N\ge 3$ be a prime number. Suppose that the weights $\bsgamma =
(\gamma_\setu)_{\setu}$ satisfy
\[
  C := \sum_{|\setu|<\infty} \gamma_\setu\, C_\setu < \infty,
\]
where $C_{\setu}$ is the constant as in Theorem~\ref{thm:bd-bar-e-u-N}.
Then, the generating-vector-averaged worst-case error $\overline{e}^{2}(N)
$ defined as in \eqref{eq:def-bar-e2N} satisfies
\begin{equation}
		\overline{e}^{2}(N)\le C \frac{(\ln N)^{\alpha}}{{N}},
\end{equation}
with $C>0$ independent of $s$ and $N$.
As a consequence, there exists a generating vector $\bsz^*\in
Z_{N}^s=\{z\in \mathbb{Z}\mid 1\le z\le N-1 \}^s$ that attains the
worst-case error
\begin{equation}
		{e}(N,\bsz^*)\le \sqrt{C} \frac{(\ln N)^{\alpha/2}}{\sqrt{N}}.
		\end{equation}
\end{corollary}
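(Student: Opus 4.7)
The plan is a direct averaging argument that composes Theorem~\ref{thm:bd-bar-e-u-N} with the weighted decomposition of the generating-vector-averaged squared error.

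First I would start from the identity \eqref{eq:def-bar-e2N}, which expresses $\overline{e}^{2}(N)$ as a weighted sum of the quantities $\overline{e}^{2}_{\setu}(N)$. Invoking Theorem~\ref{thm:bd-bar-e-u-N}, each term obeys $\overline{e}^{2}_{\setu}(N)\le C_{\setu}^{2}\,(\ln N)^{\alpha}/N$ under the hypothesis $(\ln N)^{\alpha}/N\le 1$ (which can be assumed for $N$ sufficiently large, and otherwise the statement is trivial after adjusting the constant). Summing over $\emptyset\neq \setu\subseteq\{1:s\}$ yields
\[
 \overline{e}^{2}(N)\;\le\;\frac{(\ln N)^{\alpha}}{N}\sum_{\emptyset\neq\setu\subseteq\{1:s\}}\gamma_{\setu}\,C_{\setu}^{2},
\]
and the hypothesised summability of $\sum_{|\setu|<\infty}\gamma_{\setu}\,C_{\setu}$ (together with the fact that $C_{\setu}$ is bounded below by a universal positive constant, so $C_{\setu}\le C_{\setu}^{2}/C_{\setu}$-type manipulations are controlled) guarantees that the series on the right converges to a finite value independent of $s$, which we identify with $C$. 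This gives the first displayed inequality.

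For the second statement, I would invoke the standard pigeonhole/averaging principle: since $\overline{e}^{2}(N)$ is by definition \eqref{eq:avg} the arithmetic mean of $e^{2}(N,\bsz)$ over $\bsz\in Z_{N}^{s}$, there must exist at least one generating vector $\bsz^{*}\in Z_{N}^{s}$ with
\[
 e^{2}(N,\bsz^{*})\;\le\;\overline{e}^{2}(N)\;\le\;C\,\frac{(\ln N)^{\alpha}}{N}.
\]
Taking the square root delivers the claimed bound on $e(N,\bsz^{*})$.

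There is essentially no hard step here once Theorem~\ref{thm:bd-bar-e-u-N} is in hand; the only thing requiring attention is the correct handling of the constants, in particular making sure that the hypothesis on the weights is strong enough to sum the bounds on $\overline{e}^{2}_{\setu}(N)$ (so one really wants $\sum\gamma_{\setu}C_{\setu}^{2}$ finite, which is implied by the stated condition provided $C_{\setu}\ge 1$, as is evident from its definition) and that the resulting constant is genuinely independent of the dimension $s$. The $s$-independence follows because the bound $C_{\setu}^{2}$ depends on $\setu$ only through $|\setu|$ and is summable against weights decaying fast enough in $|\setu|$, which is precisely the summability assumption.
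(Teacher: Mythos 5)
Your overall route is the paper's: substitute Theorem~\ref{thm:bd-bar-e-u-N} into the weighted decomposition \eqref{eq:def-bar-e2N}, sum over $\setu$, and then invoke the averaging/pigeonhole principle to extract a single good $\bsz^*$. You also correctly observe that since the theorem bounds $\overline{e}_\setu(N)$ (not its square) by $C_\setu(\ln N)^{\alpha/2}/\sqrt{N}$, the substitution actually produces $\overline{e}^2(N)\le \bigl(\sum_{\emptyset\neq\setu}\gamma_\setu C_\setu^2\bigr)(\ln N)^\alpha/N$. That is a sharper reading than what the paper's own proof writes, which drops a square and passes directly to $\sum\gamma_\setu C_\setu$.

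Where your argument goes wrong is the attempted reconciliation of $\sum\gamma_\setu C_\setu^2$ with the stated hypothesis $\sum\gamma_\setu C_\setu<\infty$. You claim this follows ``provided $C_\setu\ge 1$,'' but the implication runs the other way: $C_\setu\ge 1$ gives $C_\setu^2\ge C_\setu$, so $\sum\gamma_\setu C_\setu<\infty$ gives no control on $\sum\gamma_\setu C_\setu^2$. Nor can you appeal to a uniform upper bound on $C_\setu$, because $C_\setu^2$ contains the term $\bigl(\tfrac{3C_1}{4\pi^2}+\tfrac{5}{6}\bigr)^{|\setu|}$, which grows without bound in $|\setu|$ whenever $C_1>\tfrac{2\pi^2}{9}$ (and the paper's numerics suggest $C_1\approx 20$). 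The clean fix is to read the corollary's hypothesis as $C:=\sum_{|\setu|<\infty}\gamma_\setu C_\setu^2<\infty$ (equivalently, take ``the constant from Theorem~\ref{thm:bd-bar-e-u-N}'' to mean the coefficient of $(\ln N)^\alpha/N$ in the bound for the \emph{squared} quantity); with that reading the paper's proof and the averaging step go through verbatim. You are also right that the theorem's side condition $(\ln N)^\alpha/N\le 1$ is being used silently; your ``otherwise trivial after adjusting constants'' patch is shaky because the adjusted constant would then depend on $N$, undercutting the claimed uniformity, though this is a minor point compared with the $C_\setu^2$ issue.
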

	
\begin{proof}
From \eqref{eq:def-bar-e2N} and Theorem~\ref{thm:bd-bar-e-u-N}, we have
\begin{equation}
		\overline{e}^{2}(N)
		\le \sum_{\emptyset \neq \setu \subseteq \{1:s\}} \gamma_{\setu} C_{\setu} \frac{(\ln N)^{\alpha}}{{N}}
  \le C \frac{(\ln N)^{\alpha}}{{N}}.
\end{equation}
Now, recall that $\overline{e}^{2}(N)$ is defined in
\eqref{eq:def-bar-e2N} as the average of ${e}^2(N,\bsz)$ over all possible
$\bsz$. Thus, there must be at least one $\bsz^*$ such that
\[
		{e}^2(N,\bsz^*)\le C \frac{(\ln N)^{\alpha}}{{N}},
\]
which yields the second statement.
\end{proof}

\section{Numerical experiments on the conjecture} \label{sec:num}

In this section, we present numerical evidence relating to
{Conjecture~\ref{conj:alpha}}. We compute the numbers
$\{T_N(\kappa)\}_{\kappa=1}^{(N-1)/2}$, given by \eqref{eq:T-def} for
varying $N$. For each fixed $N$, we sort these values in non-increasing
order, which we write as $(T_N(\kappa_j))_{j=1,\dots,(N-1)/2}$, plot the
values, and make a guess of the constants $C_1$, $C_2$ in
{Conjecture~\ref{conj:alpha}}. We used Julia 0.6.2.\ for the
experiments below.

Figure~\ref{fig:50kto100k} shows the values of $\frac{N}{(\ln N)^\alpha
}T_N(\kappa_j)$ against $j/(\ln N)^\alpha$ for $j=1,\dots,(N-1)/2$ with
$\alpha=2,3$, and $N=50021,74687,99991$. We see that for both $\alpha=2$
and $3$ and these values of $N$ we can take constants $C_1$, $C_2$ such
that for all $j/(\ln N)^\alpha>C_2$ with $j=1,\dots,(N-1)/2$ we have
$T_N(\kappa_j){N}/{(\ln N)^\alpha }\le C_1$: for example, $C_1=20$ and
$C_2=10$. This is consistent with {Conjecture~\ref{conj:alpha}},
especially for $\alpha = 3$. Of course, we cannot be certain even in this
case that the bounds will hold for very large $N$, with these or any
constants. But even if the conjecture fails, the numerical experiments
give us confidence, even for $\alpha=2$, that the bounds in
Theorem~\ref{thm:bd-bar-e-u-N} will hold with $C_1=20$ and $C_2=10$ for
$N$ up to at least a few hundred thousand.

\begin{figure}
	\centering
	\subfloat[]{
		\includegraphics[width=0.9\textwidth]{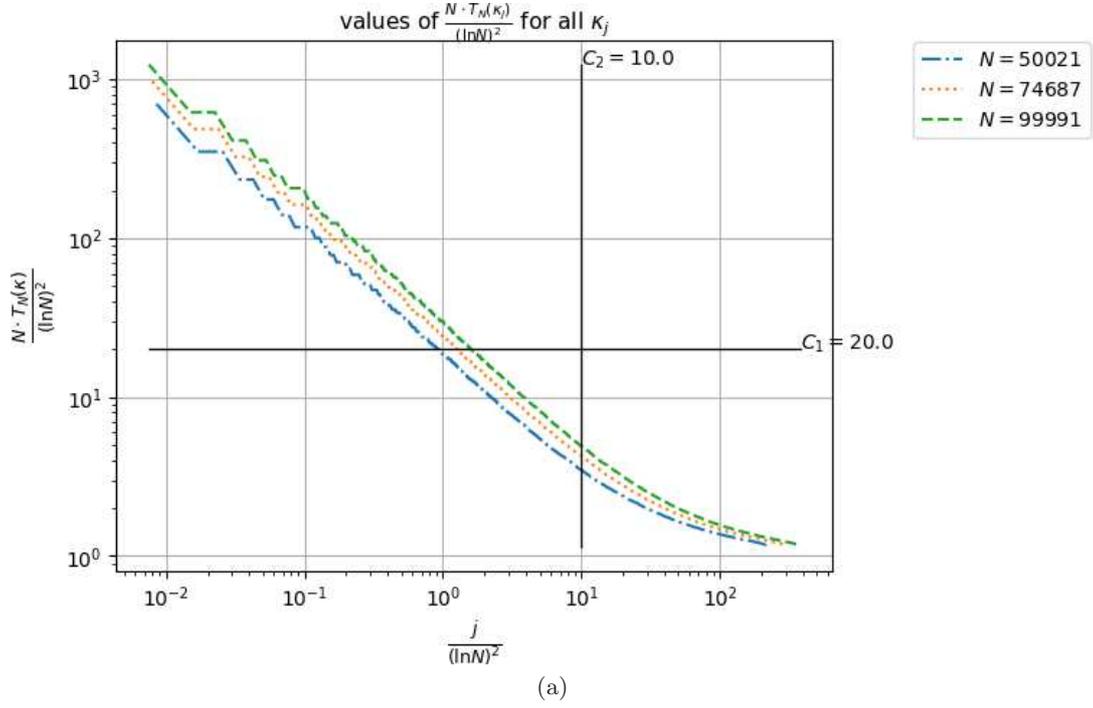}
	}
	
	\subfloat[]{
		\includegraphics[width=0.9\textwidth]{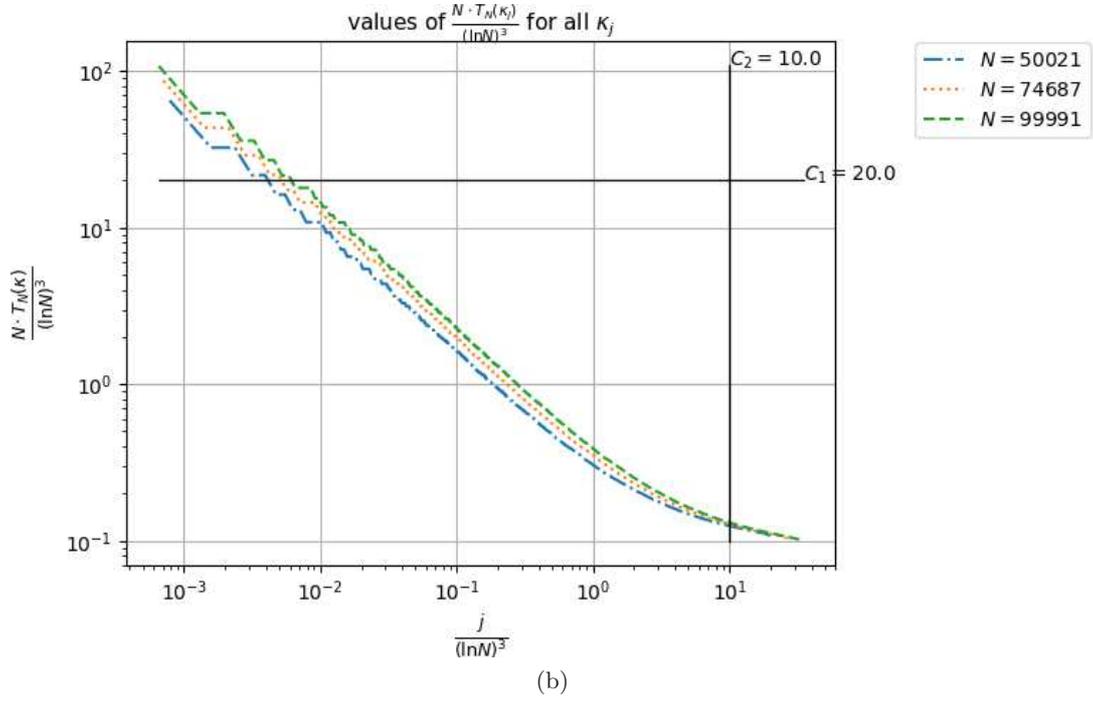}
	}
	\caption{Values of $T_N(\kappa_j){N}/{(\ln N)^\alpha }$, against $j/(\ln N)^\alpha$ for $j=1,\dots,(N-1)/2$. Top: $\alpha=2$. Bottom: $\alpha=3$. We see there exist constants $C_1$, $C_2$ such that for all $j/(\ln N)^\alpha>C_2$ we have $T_N(\kappa_j){N}/{(\ln N)^\alpha }\le C_1$: for example, $C_1=20$ and $C_2=10$.}
	\label{fig:50kto100k}
\end{figure}

\section{Conclusion}\label{sec:conclusion}

In this paper, we considered the worst-case error for unshifted lattice
rules without randomisation. A conjecture to support the error estimate
was proposed. Given the conjecture, we showed the existence of a
generating vector that attains the worst-case error $1/\sqrt{N}$, up to a
logarithmic factor. Numerical experiments suggest that the conjecture is
plausible.

{
\section*{Acknowledgements}
We gratefully acknowledge the financial support from the Australian
Research Council (FT130100655 and DP180101356).}

\bibliographystyle{plain}

\end{document}